\author{Cinzia Bisi\thanks{Partially supported by Progetto MIUR di
Rilevante Interesse Nazionale {\it Propriet{\`a} geometriche
delle variet{\`a} reali e complesse} and by GNSAGA - INDAM. \smallskip \newline
AMS MSC: 14R10 (Primary), 14E05, 20H15 (Secondary) \newline
Key words: proper polynomial maps, Galois coverings, complex reflection groups.} \,, Francesco Polizzi}
\title{On proper polynomial maps of $\mC^2$}
\date{}
\newtheorem{inizio}{Lemma}[section]
\newtheorem{theorem}[inizio]{Theorem}
\newtheorem{corollary}[inizio]{Corollary}
\newtheorem{proposition}[inizio]{Proposition}
\newtheorem{remark}[inizio]{Remark}
\newtheorem{lemma}[inizio]{Lemma}
\newtheorem{claim}[inizio]{Claim}
\newtheorem{definition}[inizio]{Definition}
\newtheorem{question}[inizio]{Question}
\newtheorem{o-problem}[inizio]{Open Problem}
\newtheorem*{teo-L}{Theorem}
\newtheorem*{teoA}{Theorem A}
\newtheorem*{teoB}{Theorem B}
\newtheorem*{teoC}{Theorem C}
\newtheorem*{corollary-s}{Corollary}
\theoremstyle{definition}
\newtheorem{example}[inizio]{Example}
\newcommand{\lr}{\longrightarrow}
\newcommand{\mC}{\mathbb{C}}
\begin{document}


\maketitle


\abstract Two proper polynomial maps $f_1, \,f_2 \colon \mC^2 \lr
\mC^2$ are said to be \emph{equivalent} if there exist $\Phi_1,\,
\Phi_2 \in \textrm{Aut}(\mC^2)$ such that $f_2=\Phi_2 \circ f_1
\circ \Phi_1$. We investigate proper polynomial maps of
topological degree $d \geq 2$ up to equivalence. Under the further
assumption that the maps are Galois coverings we also provide the
complete description of equivalence classes. This widely extends
previous results obtained by Lamy in the case $d=2$.
\endabstract

\section{Introduction} \label{sec:intro}
Let $f \colon \mathbb{C}^2 \lr \mathbb{C}^2$ be a dominant polynomial map.
We say that $f$ is {\it proper} if it is closed and for every point $y \in \mathbb{C}^2$
the set $f^{-1}(y)$ is compact. The {\it topological degree} $d$ of $f$
is defined as the number of preimages of a general point.\\
The semi-group of proper polynomial maps from $\mathbb{C}^2$ to $\mathbb{C}^2$
is not completely understood yet.
It is known that these maps cannot provide any counterexample
to the Jacobian Conjecture, see \cite[Theorem 2.1]{BCW82}.
Nevertheless, it is worthwhile to study them from other points of view,
for instance analyzing their dynamical behaviour;
this investigation was recently started in \cite{DS03}, \cite{DS08}, \cite{FJ07} and \cite{FJArx07}.
In the present paper we do not consider any dynamical question but we
 try to generalize to arbitrary $d \geq 3$
the following theorem, proved in \cite{Lam05}.
\begin{theorem}[Lamy] \label{teo:Lamy}
Let $f \colon \mC^2 \lr \mC^2$ be a proper polynomial map of
topological degree $2$. Then there exist $\Phi_1,\, \Phi_2 \in
\emph{Aut}(\mC^2)$ such that
\begin{equation*}
f=\Phi_2 \circ \tilde{f} \circ \Phi_1,
\end{equation*}
where $\tilde{f}(x, \, y)=(x, y^2)$.
\end{theorem}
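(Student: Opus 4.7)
The plan is to exploit the fact that a degree-$2$ finite cover is automatically Galois, combined with the linearization of finite subgroups of $\textrm{Aut}(\mC^2)$. First I would observe that since $f$ is proper with finite fibres it is a finite morphism; moreover any field extension of degree $2$ in characteristic zero is Galois, so $f$ is a Galois cover with group $\mZ / 2 \mZ$. The non-trivial element of the Galois group acts on the function field of the source and, by normality of $\mC^2$ together with finiteness of $f$, extends to a regular involution $\sigma \in \textrm{Aut}(\mC^2)$ satisfying $f \circ \sigma = f$.

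Next I would invoke the classical theorem that every finite subgroup of $\textrm{Aut}(\mC^2)$ is conjugate, inside $\textrm{Aut}(\mC^2)$, to a subgroup of $\textrm{GL}_2(\mC)$. Applied to $\langle \sigma \rangle$ this produces $\Phi_1 \in \textrm{Aut}(\mC^2)$ such that $\Phi_1^{-1} \sigma \Phi_1$ is a linear involution; after this change of coordinates on the source I may assume $\sigma$ itself is linear, and up to a further conjugation in $\textrm{GL}_2(\mC)$ it is either the antipodal involution $(x,y) \mapsto (-x,-y)$ or the pseudo-reflection $(x,y) \mapsto (x,-y)$.

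To decide between the two, I would use that $f$ factors through the categorical quotient $\mC^2 / \langle \sigma \rangle$, and that the induced morphism from this quotient to the target $\mC^2$ is birational and finite between normal varieties, hence an isomorphism. In particular the quotient must be smooth. This rules out the antipodal involution, whose ring of invariants $\mC[x^2, xy, y^2]$ defines the singular quadric cone $\{uw = v^2\}$. Therefore $\sigma(x,y) = (x,-y)$, the quotient map is $(x,y) \mapsto (x, y^2) = \tilde f(x,y)$, and $\Phi_2$ is nothing but the resulting isomorphism identifying $\mC^2 / \langle \sigma \rangle$ with the target of $f$.

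The main obstacle is the second step: one has to cite (or reprove) the linearization of finite subgroups of $\textrm{Aut}(\mC^2)$, which is due to Kambayashi and can also be derived from the Jung--van der Kulk amalgamated-product decomposition of $\textrm{Aut}(\mC^2)$ via Serre's fixed-point theorem on trees. Once this ingredient is available the remaining argument is formal and reduces to the elementary observation that, among linear involutions of $\mC^2$, only the pseudo-reflections yield a smooth quotient.
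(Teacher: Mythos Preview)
Your argument is correct. The paper does not prove this statement directly---it is quoted from \cite{Lam05}---but it remarks that Theorem~C recovers it as the case $d=2$, and your proposal is exactly that specialization: a degree-$2$ proper map is automatically Galois with group $\mathbb{Z}_2$, Kambayashi's linearization (cited in the paper as \cite{Ka79}) makes the involution linear, and smoothness of the quotient forces it to be a pseudo-reflection rather than $-\mathrm{id}$, which is precisely the statement that $\mathbb{Z}_2$ must act as a complex reflection group. So your route and the paper's framework coincide; the only extra step you supply (and the paper leaves implicit) is the observation that topological degree $2$ forces the covering to be Galois, which is indeed immediate from the degree-$2$ field extension and normality of $\mathbb{C}[x,y]$.
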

We say that two proper polynomial maps $f_1, \,f_2 \colon \mC^2 \lr
\mC^2$ are \emph{equivalent} if there exist $\Phi_1,\, \Phi_2 \in
\textrm{Aut}(\mC^2)$ such that
\begin{equation*}
f_2=\Phi_2 \circ f_1  \circ \Phi_1.
\end{equation*}
One immediately check that equivalent maps have the same topological degree.
Therefore Theorem \ref{teo:Lamy} says that when $d=2$ there is just one equivalence
class, namely that of $\tilde{f}$. \\
The aim of our work is to answer some questions that naturally arise from
Lamy's result. The first one, already stated in \cite{Lam05}, is the following:
\begin{question}
Is every proper polynomial map $ f \colon \mC^2 \lr \mC^2$
equivalent to some map of type $(x, \, y) \lr (x, \, P(y))?$
\end{question}
The answer is negative, and a counterexample
 is provided already in degree $3$ by the proper
 map $f \colon \mathbb{C}^2 \lr \mathbb{C}^2$
 given by
 \begin{equation*}
f(x, \, y)=(x, \, y^3+xy).
\end{equation*}
This map was first considered by Whitney; clearly it is not equivalent to
 any map of the form $(x, \, y) \lr (x, \, P(y))$, since its branch
 locus is the cuspidal cubic of equation $4x^3+27y^2=0$
 (see Remark \ref{rem:Witney-no-sep}). The particular form
 of this counterexample led us to the following very natural question:
\begin{question} \label{q-A}
Is every polynomial map $f \colon \mC^2 \lr \mC^2$ equivalent to
some map of type $(x, \, y) \lr (x, \, Q(x,y))?$
\end{question}
And, more generally:
\begin{question} \label{q-B}
How many equivalence classes of proper polynomial maps of fixed
topological degree $d \geq 3$ are there?
\end{question}
Answers to Questions \ref{q-A} and \ref{q-B} are the relevant results of
Section \ref{sec:proofs}, referred as Theorems A and B.
\begin{teoA}
For every $d \geq 3$ there exists at least one proper polynomial map
$f \colon \mC^2 \lr \mC^2$ such that $f$ is not equivalent to
 a map of type $(x, \, y) \lr (x, \, Q(x,y))$.
\end{teoA}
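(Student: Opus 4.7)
The plan is to reformulate ``$f$ is equivalent to $(x,y)\mapsto(x,Q(x,y))$'' as an obstruction on pullbacks of coordinate polynomials, and then to exhibit explicit counterexamples in each degree $d\geq 3$. If $f=\Phi_2\circ g\circ\Phi_1$ with $g(x,y)=(x,Q(x,y))$, composing with the first projection $\pi\colon\mC^2\to\mC$ and using $\pi\circ g=\pi$ gives $(\pi\circ\Phi_2)\circ f=\pi\circ\Phi_1^{-1}$. Setting $\alpha:=\pi\circ\Phi_2$ and $\gamma:=\pi\circ\Phi_1^{-1}$, both are components of automorphisms of $\mC^2$---call these \emph{coordinate polynomials}; by the Abhyankar--Moh theorem, they are precisely the polynomials whose generic fiber is isomorphic to $\mathbb{A}^1$. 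Conversely, given coordinate polynomials $\alpha,\gamma$ with $\alpha\circ f=\gamma$, completing each to an automorphism recovers $\Phi_1,\Phi_2$. So Theorem A reduces to producing, for each $d\geq 3$, a proper polynomial map $f_d$ of degree $d$ such that for every coordinate polynomial $\alpha$ of the target, the pullback $\alpha\circ f_d$ has generic fiber not isomorphic to $\mathbb{A}^1$.

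For composite $d=ab$ with $a,b\geq 2$, I would take $f_d(x,y)=(x^a,y^b)$: a proper polynomial map of degree $d$, Galois with group $\mu_a\times\mu_b$. Any pullback $\alpha(x^a,y^b)$ lies in the invariant ring $\mathbb{C}[x^a,y^b]$; if it were a coordinate polynomial, its generic fiber (an $\mathbb{A}^1$) would carry a faithful action of $\mu_a\times\mu_b$. But every algebraic action of a finite group on $\mathbb{A}^1$ is cyclic, contradicting the non-cyclicity of $\mu_a\times\mu_b$ when $a,b\geq 2$. This disposes of all composite $d\geq 4$.

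For prime $d$, and especially $d=3$, the product construction is unavailable, and I would instead build a tailored non-Galois cover. The idea is to take $f_d=(p,q)$ with $p$ and $q$ chosen so that neither is a coordinate polynomial (for instance, their generic fibers are isomorphic to $\mC^{\ast}$ rather than $\mathbb{A}^1$) and so that the branch locus of $f_d$ in the target is incompatible with any pencil of affine lines: for example, an irreducible branch divisor whose number of components, singularity type, or geometric genus cannot arise as the discriminant of any map of the form $(X,Q(X,Y))$.

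The hard part will be ruling out \emph{every} coordinate polynomial $\alpha$ of the target, since these form an infinite-dimensional orbit under $\mathrm{Aut}(\mC^2)$. The plan is to use the Jung--van der Kulk description of $\mathrm{Aut}(\mC^2)$ as the amalgamated product of its affine and triangular subgroups to normalize $\alpha$ to a short list of cases, and then to verify the obstruction on each normal form using the geometry of the critical and branch divisors of $f_d$. The prime-degree case, and $d=3$ in particular, is the most delicate: no product-type symmetry is available, and the obstruction must be extracted from a finer analysis of the constructed map and its branch curve.
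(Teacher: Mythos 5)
Your reduction of the problem to coordinate polynomials (via Abhyankar--Moh--Suzuki: $f$ is equivalent to a semi-separate map iff some component $\alpha$ of an automorphism of the target pulls back under $f$ to a component of an automorphism of the source) is correct, up to a harmless bookkeeping slip with the inverses, and the invariant-theoretic idea for $f=(x^a,y^b)$ is sound as far as it goes. But there are two genuine gaps. First, your claim that $\mu_a\times\mu_b$ is non-cyclic whenever $a,b\geq 2$ is false: for coprime $a,b$ (e.g.\ $a=2$, $b=3$) this group is cyclic, so your contradiction evaporates and the argument only disposes of degrees $d$ admitting a factorization $d=ab$ with $\gcd(a,b)>1$, i.e.\ non-squarefree $d$; squarefree composite degrees such as $d=6$, $10$, $15$ are not covered, contrary to your assertion that ``this disposes of all composite $d\geq 4$''. (You also assert, rather than prove, that the $\mu_a\times\mu_b$-action on a \emph{generic} fiber of $\alpha\circ f$ is faithful; this is true but needs the observation that the fixed loci of nontrivial elements are the axes, which can contain at most finitely many fibers.) Second, and more seriously, the prime case --- including $d=3$, the very first case the theorem demands --- is not proved at all: you give no explicit map, no explicit obstruction, and the ``plan'' of normalizing every coordinate polynomial via Jung--van der Kulk is precisely the infinite-dimensional verification that makes this route hard; nothing in your text carries it out. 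As it stands the proposal proves Theorem A only for $d$ with a square factor.

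For comparison, the paper avoids quantifying over coordinate polynomials altogether. It exhibits the single family $f_d(x,y)=(x+y+xy,\;x^{d-1}y)$, proper of degree $d$ for every $d\geq 2$ (checked by writing monic integral equations for $x$ and $y$ over $\mC[s,t]$ and invoking Proposition \ref{prop:Jel}). The obstruction is a biholomorphism invariant of the critical locus: for a proper semi-separate map the Jacobian is monic of degree $d-1$ in $y$, so in any factorization $J=H_1^{d-2}H_2$ both factors are monic linear in $y$ and hence $V(H_1),V(H_2)\cong\mC$ (Lemma \ref{divides}); on the other hand $J_{f_d}=x^{d-2}\bigl((2-d)xy+x-(d-1)y\bigr)$, whose conic factor defines a curve isomorphic to $\mC^{\ast}$. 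Since equivalence transports the critical locus by an automorphism (Proposition \ref{crit-bihol}), $f_d$ cannot be equivalent to any semi-separate map, uniformly for all $d\geq 3$. If you want to salvage your approach, you must either produce and analyze an explicit example for each prime (and squarefree composite) $d$, or replace the group-theoretic obstruction by one, like the paper's, that does not depend on $d$ factoring.
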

\begin{teoB}
For all positive integers $d, \, n$, with $d \geq 3$ and $n \geq 2$,
consider the polynomial map $f_{d, \,n} \colon \mathbb{C}^2 \lr
\mathbb{C}^2$ given by
\begin{equation*}
f_{d, \, n}(x,\,y):=(x, \, y^d-dx^ny).
\end{equation*}
Then $f_{d, \,n}$ and $f_{d, \, m}$ are equivalent if and only if
$m=n$. It follows that there exist infinitely many different
equivalence classes of proper polynomial maps $f \colon \mC^2 \lr
\mC^2$ of fixed topological degree $d$.
\end{teoB}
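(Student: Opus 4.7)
The plan is to extract an invariant under the equivalence relation that distinguishes the maps $f_{d,n}$ for different values of $n$. The natural candidate is the branch curve $B_n \subset \mC^2$, i.e.\ the image of the ramification locus of $f_{d,n}$, since if $f_2 = \Phi_2 \circ f_1 \circ \Phi_1$ with $\Phi_i \in \textrm{Aut}(\mC^2)$, the chain rule forces $\Phi_2(B_1) = B_2$. Consequently any analytic invariant of the embedded singularities of $B_n$ is preserved, and it suffices to show that $B_n \not\sim B_m$ under $\textrm{Aut}(\mC^2)$ when $n \neq m$.

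A quick check establishes that $f_{d,n}$ is proper of topological degree $d$: one has $f_{d,n}^{-1}(0,0) = \{(0,0)\}$, which is finite (hence properness), and the equation $y^d - dx^n y = Y_0$ generically has $d$ solutions. A direct computation then gives
\begin{equation*}
\det J(f_{d,n})(x,y) = d(y^{d-1} - x^n),
\end{equation*}
so the ramification curve is $R_n = \{y^{d-1} = x^n\}$. Parametrizing $R_n$ by $x = t^{d-1}$, $y = \zeta t^n$ with $\zeta^{d-1} = 1$, and exploiting the identity $y^d = x^n y$ on $R_n$, I would compute the image to be
\begin{equation*}
B_n = \bigl\{(X,Y) \in \mC^2 : Y^{d-1} = (-1)^{d-1}(d-1)^{d-1}\, X^{nd}\bigr\}.
\end{equation*}

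The partial derivatives of $F_n(X,Y) := Y^{d-1} - c_d\, X^{nd}$ (with $c_d \neq 0$) vanish simultaneously only at $(0,0)$, so $B_n$ is smooth away from the origin and has an isolated singularity there. The Milnor number is readily computed as
\begin{equation*}
\mu(B_n, 0) = \dim_{\mC} \mC\{X,Y\}/(Y^{d-2}, X^{nd-1}) = (d-2)(nd-1).
\end{equation*}
Now, if $f_{d,n}$ and $f_{d,m}$ are equivalent, then $\Phi_2$ carries $B_n$ biregularly onto $B_m$. Since the origin is the unique singular point of each curve and $\mu(B_n, 0) > 0$ for $d \geq 3$, $n \geq 2$, necessarily $\Phi_2(0,0) = (0,0)$. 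The invariance of the Milnor number under local analytic equivalence then forces $(d-2)(nd-1) = (d-2)(md-1)$, and the hypothesis $d \geq 3$ yields $n = m$. Varying $n \geq 2$ produces infinitely many pairwise inequivalent proper polynomial maps of fixed topological degree $d$.

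The main points requiring care are the explicit elimination of $\zeta$ in the branch curve computation (which is what produces the exponent $nd$ that encodes $n$) and the verification that $B_n$ has a \emph{unique} singular point, so that $\Phi_2$ must fix the origin; both are clean once the Jacobian of $F_n$ is written down explicitly. Beyond that the argument is purely formal, resting on the fact that the Milnor number is a local analytic invariant of an isolated hypersurface singularity.
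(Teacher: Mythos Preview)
Your argument is correct and follows the same strategy as the paper: attach to $f_{d,n}$ a curve that is preserved up to biholomorphism under the equivalence relation, and separate the maps by the Milnor number of its unique singular point. The paper, however, works directly with the \emph{critical} locus $\textrm{Crit}(f_{d,n})=\{y^{d-1}=x^n\}$, obtaining Milnor number $(d-2)(n-1)$ with no further computation; your route through the branch curve $B_n=\{Y^{d-1}=c_d\,X^{nd}\}$ yields $(d-2)(nd-1)$ and reaches the same conclusion, at the cost of the parametrization and elimination step. Both invariants distinguish the $f_{d,n}$ for distinct $n$, so either choice works; the critical locus is simply the shorter path.

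One point to correct: finiteness of the single fibre $f_{d,n}^{-1}(0,0)$ does \emph{not} imply properness (compare $(x,y)\mapsto(x+x^2y,\,y)$, all of whose fibres are finite). Here properness is immediate because $f_{d,n}$ is semi-separate with second component $y^d-dx^ny$ monic of degree $d$ in $y$, so that $\mathbb{C}[x,y]$ is integral over $f_{d,n\,*}\mathbb{C}[s,t]$.
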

Comparing Theorem \ref{teo:Lamy} with Theorems A and B, one sees that the behaviour
of proper polynomial maps $\mathbb{C}^2 \lr \mathbb{C}^2$ up to equivalence
is completely different for $d=2$ and for $d \geq 3$. It seems that
a satisfactory description of all equivalence classes in the case
$d \geq 3$ is at the moment
out of reach; nevertheless, one could hope at least to classify those proper maps
enjoying some additional property. For this reason, in Section 3 we restrict our attention
 to polynomial maps $f \colon \mathbb{C}^2 \lr
\mathbb{C}^2$ which are \emph{Galois
coverings} with finite Galois group $G$.
All these maps are proper and their topological
degree equals $|G|$; moreover $G \subset \textrm{Aut}(\mathbb{C}^2)$ and
$f$ can be identified with the quotient map $\mathbb{C}^2 \lr \mathbb{C}^2/G$.
Since $G$ is a finite group, we may assume that
$G \subset \textrm{GL}(2,\, \mathbb{C})$ by a polynomial
change of coordinates, see \cite{Ka79}, and since $\mathbb{C}^2/G \cong \mathbb{C}^2$
it follows that $G$ is a {\it finite complex reflection group}.
These groups and their conjugacy classes in $\textrm{GL}(2, \, \mathbb{C})$
 were completely classified in \cite{ST54} and \cite{Coh76}. Therefore we may exploit
 this classification in order to prove the main result of Section \ref{sec:Galois}.
\begin{teoC}[see Theorem \ref{teo:Galois}]
Let $f \colon \mathbb{C}^2 \lr \mathbb{C}^2$ be a polynomial map
which is a Galois covering with finite Galois group $G$. Then, up to
equivalence, we are in one of the cases in Table
\emph{\ref{table:Galois}} of Section \emph{\ref{sec:Galois}}.
\end{teoC}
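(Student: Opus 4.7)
The plan is to reduce the statement to the Shephard-Todd-Cohen classification of finite complex reflection groups of rank two, the geometric content being supplied by the Shephard-Todd-Chevalley theorem together with Kambayashi's linearization result.

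As a first step, since $f$ is a Galois covering with group $G$, the map $f$ factors as the quotient $\mC^2 \lr \mC^2/G$ composed with an isomorphism $\mC^2/G \cong \mC^2$. The group $G$ acts on the source by polynomial automorphisms, and by Kambayashi's theorem \cite{Ka79} any faithful action of a finite group on $\mC^2$ is conjugate, via a polynomial automorphism $\Phi_1 \in \textrm{Aut}(\mC^2)$, to a linear one. Thus, after replacing $f$ by $f \circ \Phi_1$, I may assume $G \subset \textrm{GL}(2,\mC)$.

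Secondly, since the quotient $\mC^2/G$ is smooth, the Shephard-Todd-Chevalley theorem forces $G$ to be generated by pseudo-reflections, i.e.\ $G$ is a \emph{finite complex reflection group}, and $\mC[x,y]^G = \mC[u_1, u_2]$ is a polynomial algebra in two algebraically independent basic invariants. Consequently, up to post-composition with an automorphism $\Phi_2 \in \textrm{Aut}(\mC^2)$ of the target, $f$ takes the concrete form $(x,y) \mapsto (u_1(x,y), u_2(x,y))$.

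Thirdly, I would invoke the Shephard-Todd \cite{ST54} and Cohen \cite{Coh76} classification of the conjugacy classes of finite complex reflection subgroups of $\textrm{GL}(2,\mC)$: these split into cyclic groups, the imprimitive family $G(m,p,2)$, and the finitely many exceptional groups of tetrahedral, octahedral and icosahedral type. For each conjugacy class I would pick a normal form of $G$ and record an explicit basic system of invariants $(u_1, u_2)$; the resulting maps are precisely the entries of Table~\ref{table:Galois}. Conversely, if two such maps $f, f'$ (coming from groups $G, G' \subset \textrm{GL}(2,\mC)$) are equivalent via $f' = \Phi_2 \circ f \circ \Phi_1$, then $\Phi_1$ conjugates the source-actions, so $G$ and $G'$ are conjugate in $\textrm{Aut}(\mC^2)$; applying Kambayashi's theorem once more upgrades this to conjugacy inside $\textrm{GL}(2,\mC)$, and the classification then discriminates between rows of the table.

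The main obstacle is the explicit case analysis in step three: for each infinite family and for each exceptional group in the Shephard-Todd-Cohen list, one must exhibit basic invariants in a sufficiently clean form and check that the resulting row cannot be transformed into any other row by automorphisms of source and target. A subtle point, which has to be dealt with carefully, is that two abstractly isomorphic reflection groups may sit as non-conjugate subgroups of $\textrm{GL}(2,\mC)$, and in that case they produce genuinely inequivalent quotient maps that must appear as separate entries of Table~\ref{table:Galois}.
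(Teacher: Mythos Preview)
Your outline is essentially the paper's own strategy: linearize via Kambayashi, recognize $G$ as a complex reflection group by Shephard--Todd--Chevalley, and then read off a basic set of invariants from the Shephard--Todd/Cohen classification; this is exactly how the paper reduces to Table~\ref{table:Galois} through Propositions~\ref{reducible}, \ref{conjugacy-irreducible} and Corollary~\ref{cor:basic set}.

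Two small points where your write-up departs from the paper. First, for the inequivalence of distinct rows the paper does \emph{not} argue via $\textrm{GL}(2,\mathbb C)$-conjugacy; instead it proves directly (Propositions~\ref{infinite family} and~\ref{prop:no-isomorphism}) that the listed groups are pairwise non-isomorphic as abstract groups, with the single exception $G(2,1,2)\cong G(4,4,2)$, which are in fact conjugate. So your closing worry about ``abstractly isomorphic but non-conjugate'' reflection subgroups of $\textrm{GL}(2,\mathbb C)$ does not materialize in rank two. Second, your sentence ``applying Kambayashi's theorem once more upgrades this to conjugacy inside $\textrm{GL}(2,\mathbb C)$'' is a slight overreach: Kambayashi gives linearizability, not uniqueness of the linear model. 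The missing step is elementary (translate so that the conjugating automorphism fixes the origin, then take its differential there), but it is not literally Kambayashi and should be spelled out if you pursue the conjugacy route rather than the paper's abstract-isomorphism route.
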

Referring to Table \ref{table:Galois}, we observe that the case $d=2$ corresponds to the map
$\mathfrak{f}_2$ (and to the Galois group $\mathbb{Z}_2$); therefore our Theorem C
widely extends Theorem \ref{teo:Lamy}. \\
We finally remark that the equivalence relation studied in
the present paper is weaker than the
conjugacy relation, in which we require $\Phi_2=\Phi_1^{-1}$. For instance, the two maps
$f_1(x, \, y)=(x, \, y^2)$ and $f_2(x, \, y)=(x, \, y^2+x)$ are equivalent
in our sense but they are not conjugate by any automorphism of $\mathbb{C}^2$,
since their sets of fixed points are not biholomorphic. The study of conjugacy classes
of proper maps of given topological degree is certainly an interesting problem,
but we will not consider it here: some good references are \cite{FJ07} and \cite{FJArx07}. \\
Some of our computations were carried out by using the Computer Algebra Systems
 \verb|GAP4| and \verb|Singular|, see \cite{GAP4} and \cite{SING}.
For the reader's convenience, we included the scripts in the Appendix.

\bigskip
\noindent $\mathbf{Acknowledgements.}$
The first author wishes to thank
Domenico Fiorenza for some fruitful conversations on the subject.

\section{Preliminaries} \label{sec:prel}

\subsection{Proper polynomial maps} \label{sub:prop}
We recall the following
\begin{definition}
Let $f \colon \mathbb{C}^n \lr \mathbb{C}^n$ be a dominant
polynomial map. We say that $f$ is \emph{proper} if it is closed and
for every point $p \in \mathbb{C}^n$ the set $f^{-1}(p)$ is compact.
Equivalently, $f$ is proper if and only if
for every compact set $K \subset \mathbb{C}^n$ the set $f^{-1}(K)$ is compact.
\end{definition}
Notice that, in the first part of the definition, the hypothesis $f$ closed is necessary.
For example, if one considers the map
$f(x, \, y)=(x+x^2y, \, y)$ on $\mathbb{C}^2,$ $f^{-1}(p)$ is compact  because it always consists of one or two points.
However: \\
$-$ $f$ is not closed, since the image of the curve $xy+1=0$ is the set of points $\{ (0,y) \,\,\, | \,\,\, y \in \mathbb{C}^* \};$ \\
$-$ $f$ is not proper, since for any compact
neighborhood $K$ of $(0, \, 0)$ the set $f^{-1}(K)$ is never compact, see \cite{Lam05}. \\
The map also provides an example of a  
surjective map which is not necessarily proper. On the other hand, every proper map must be surjective.\\
There is a purely algebraic condition for a polynomial map to be proper, see
\cite[Proposition 3]{Jel93}:
\begin{proposition} \label{prop:Jel}
A dominant polynomial map  $f \colon \mathbb{C}^n \lr \mathbb{C}^n$
is proper if and only if the push-forward map $f_{\ast} \colon
\mathbb{C}[s_1, \ldots, s_n] \lr \mathbb{C}[x_1, \ldots, x_n]$ is
finite, i.e., $f_{\ast}\mathbb{C}[s_1, \ldots, s_n] \subset
\mathbb{C}[x_1, \ldots, x_n]$ is an integral extension of rings.
\end{proposition}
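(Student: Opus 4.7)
The plan is to establish the two implications separately. For the easy direction $(\Leftarrow)$, assume that $\mathbb{C}[x_1,\ldots,x_n]$ is integral over $R:=f_*\mathbb{C}[s_1,\ldots,s_n]$. Then each coordinate $x_i$ satisfies a monic identity
$$x_i^{d_i}+a_{d_i-1}(f(x))\,x_i^{d_i-1}+\cdots+a_0(f(x))=0$$
with $a_j\in\mathbb{C}[s_1,\ldots,s_n]$. On any compact $K\subset\mathbb{C}^n$, each $a_j\circ f$ is bounded on $f^{-1}(K)$, and the classical Cauchy root bound $|x_i|\le 1+\max_j|a_j(f(x))|$ then forces $f^{-1}(K)$ to be bounded; being the preimage of a closed set by a continuous map, it is also closed, hence compact. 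Moreover, a continuous map into a locally compact Hausdorff space with compact preimages of compact sets is automatically closed, so $f$ is proper.

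For the delicate direction $(\Rightarrow)$ I would proceed by contraposition. Because $\mathbb{C}[x_1,\ldots,x_n]$ is a finitely generated $R$-algebra (by the coordinates themselves), finiteness of the module extension is equivalent to integrality of each $x_i$ over $R$. Suppose some coordinate, say $x_1$, is \emph{not} integral. By the characterisation of the integral closure of $R$ in $\mathbb{C}(x_1,\ldots,x_n)$ as the intersection of all valuation rings of this field which contain $R$, there must exist a discrete valuation $v$ on $\mathbb{C}(x_1,\ldots,x_n)$ with $v(r)\ge 0$ for every $r\in R$ and $v(x_1)<0$. Geometrically, $v$ has a centre $p$ on some smooth projective compactification $\overline{X}$ of the source $\mathbb{C}^n$, necessarily lying in the boundary $\overline{X}\setminus\mathbb{C}^n$, and such that every pulled-back coordinate $f_*s_j$ stays regular at $p$.

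I would then choose a holomorphic arc $\gamma\colon\Delta\to\overline{X}$ with $\gamma(0)=p$ and $\gamma(\Delta\setminus\{0\})\subset\mathbb{C}^n$: along this arc $|x_1(\gamma(t))|\to\infty$ as $t\to 0$, so $\gamma(\Delta\setminus\{0\})$ escapes every compact subset of $\mathbb{C}^n$, whereas $f\circ\gamma$ extends continuously across the origin with finite limit $f(p)\in\mathbb{C}^n$ and therefore stays inside a fixed small compact neighbourhood $K$ of $f(p)$. Then $f^{-1}(K)$ contains the non-relatively-compact set $\gamma(\Delta\setminus\{0\})$, violating properness of $f$. The main obstacle in this plan is precisely the bridge between the algebraic non-integrality of $x_1$ and the production of such a concrete geometric sequence leaving every compact over a bounded target; once an appropriate compactification has been arranged so that the extension of $f$ is regular at the centre $p$, the topological contradiction falls out immediately.
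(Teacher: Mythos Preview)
The paper does not actually prove Proposition~\ref{prop:Jel}: it is quoted verbatim from \cite[Proposition~3]{Jel93} with no argument given. So there is no ``paper's own proof'' to compare against, and your proposal should be read as a stand-alone justification.

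Your $(\Leftarrow)$ direction is correct and is the standard argument: monic integral dependence relations for the $x_i$ combined with the Cauchy root bound give boundedness of $f^{-1}(K)$, and the general fact that a continuous map to a locally compact Hausdorff space with compact preimages of compacta is closed finishes it.

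Your $(\Rightarrow)$ direction is a reasonable plan, and you are honest about where the real work lies. The one point that deserves more care is the passage from ``$v(f_j)\ge 0$ for all $j$'' to ``$f$ extends regularly at the centre $p$ with image in the finite part $\mathbb{C}^n$''. Domination $\mathcal{O}_{\overline{X},p}\subset\mathcal{O}_v$ does \emph{not} by itself force $f_j\in\mathcal{O}_{\overline{X},p}$. The fix is to take $v$ divisorial (which is legitimate: for domains of finite type over a field the integral closure is already the intersection of the DVRs containing $R$), choose a smooth projective model on which the centre of $v$ is a prime divisor $E$, so that $\mathcal{O}_{\overline{X},E}=\mathcal{O}_v$; then the $f_j$ are regular along $E$, hence at a general closed point of $E$, and a transverse arc there does exactly what you want. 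With this said, the argument goes through.

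An alternative route you might find cleaner for $(\Rightarrow)$: properness in the Euclidean topology forces all fibres to be compact algebraic sets, hence finite, so $f$ is quasi-finite; and a topologically proper morphism between complex affine varieties is proper in the scheme-theoretic sense. Then ``proper $+$ quasi-finite $\Rightarrow$ finite'' (a corollary of Zariski's Main Theorem) gives module-finiteness of $\mathbb{C}[x_1,\ldots,x_n]$ over $f_*\mathbb{C}[s_1,\ldots,s_n]$ directly, bypassing the valuation-and-arc construction entirely.
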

In the sequel we will focus on proper polynomial maps $f \colon
\mathbb{C}^2 \lr \mathbb{C}^2$. We write
\begin{equation*}
f(x, \,y)= (f_1(x, \, y), \, f_2(x, \, y))
\end{equation*}
with $f_1, \, f_2 \in \mathbb{C}[x,y]$. Then the push-forward map
will be given by
\begin{equation*}
\begin{split}
f_{\ast} \colon \mathbb{C}[s, \, t] & \lr \mathbb{C}[x, \, y]\\
 & s \lr f_1(x, \, y) \\
 & t \lr f_2(x, \, y).
\end{split}
\end{equation*}
 Given such a map $f$, its \emph{Jacobian} $J_f$ is the polynomial
\begin{equation*}
J_f(x, \, y)= \left|
\begin{array}{cc}
\partial f_1/ \partial x  & \partial f_1/ \partial y \\
\partial f_2/ \partial x & \partial f_2/ \partial y \\
\end{array}
\right|.
\end{equation*}
The \emph{critical locus} $\textrm{Crit}(f)$ of $f$ is
 the affine variety  $V(J_f) \subset \mathbb{C}^2$.
 The \emph{branch locus} $B(f)$ of $f$ is the image of the critical
locus, that is $B(f)=f(\textrm{Crit}(f))$. Since $f$ is proper, the
restriction
\begin{equation*}
f \colon \mathbb{C}^2 \setminus f^{-1}(B(f)) \lr \mathbb{C}^2
\setminus B(f)
\end{equation*}
is an unramified covering of finite degree $d$; we will call $d$
the \emph{topological degree} of $f$.

\begin{definition} \label{def:equiv}
We say that two proper polynomial maps $f_1, \,f_2 \colon \mC^2
\lr \mC^2$ are \emph{equivalent} if there exist $\Phi_1,\, \Phi_2
\in \emph{Aut}(\mC^2)$ such that
\begin{equation} \label{eq:equiv}
f_2=\Phi_2 \circ f_1  \circ \Phi_1.
\end{equation}
\end{definition}
\begin{remark}
This equivalence relation in the semi-group of proper polynomial maps is weaker than the
conjugacy relation, in which we require $\Phi_2=\Phi_1^{-1}$. For instance, the two maps
$f_1(x, \, y)=(x, \, y^2)$ and $f_2(x, \, y)=(x, \, y^2+x)$ are equivalent
in our sense but they are not conjugate by any automorphism of $\mathbb{C}^2$,
since their sets of fixed points are not biholomorphic. The study of conjugacy classes
of proper polynomial maps of given topological degree is an interesting problem, but
we will not consider it in this paper.
\end{remark}
\begin{proposition} \label{crit-bihol}
If $f_1$ and $f_2$ are equivalent then they have the same
topological degree. Moreover $\emph{Crit}(f_1)$ is biholomorphic
to $\emph{Crit}(f_2)$ and $B(f_1)$ is biholomorphic to $B(f_2)$.
\end{proposition}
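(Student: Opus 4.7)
The plan is to exploit the chain rule together with the fact that any polynomial automorphism of $\mathbb{C}^2$ has a nowhere-vanishing Jacobian (in fact a nonzero constant, by the inverse function theorem applied to the global inverse). Write $f_2 = \Phi_2 \circ f_1 \circ \Phi_1$ and denote by $J_{\Phi_i}$ the Jacobian of $\Phi_i$.

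First I would handle the topological degree. Since $\Phi_1$ and $\Phi_2$ are bijections, for a generic point $q \in \mathbb{C}^2$ the set $f_2^{-1}(q) = \Phi_1^{-1}\bigl(f_1^{-1}(\Phi_2^{-1}(q))\bigr)$ is in bijection with $f_1^{-1}(\Phi_2^{-1}(q))$, and $\Phi_2^{-1}(q)$ is generic in $\mathbb{C}^2$. Hence both maps have the same number of preimages of a general point, i.e.\ the same topological degree.

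Next I would compare the critical loci. By the chain rule,
\begin{equation*}
J_{f_2}(p) \;=\; J_{\Phi_2}\bigl(f_1(\Phi_1(p))\bigr)\cdot J_{f_1}\bigl(\Phi_1(p)\bigr)\cdot J_{\Phi_1}(p).
\end{equation*}
Because $J_{\Phi_1}$ and $J_{\Phi_2}$ are nonzero constants, the vanishing locus of $J_{f_2}$ coincides with the preimage under $\Phi_1$ of the vanishing locus of $J_{f_1}$. In other words $\mathrm{Crit}(f_2)=\Phi_1^{-1}(\mathrm{Crit}(f_1))$, so the restriction of $\Phi_1$ furnishes a biholomorphism $\mathrm{Crit}(f_2)\to\mathrm{Crit}(f_1)$.

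For the branch loci I would simply compute
\begin{equation*}
B(f_2) \;=\; f_2(\mathrm{Crit}(f_2)) \;=\; \Phi_2\!\left(f_1\!\left(\Phi_1(\Phi_1^{-1}(\mathrm{Crit}(f_1)))\right)\right) \;=\; \Phi_2(B(f_1)),
\end{equation*}
so $\Phi_2$ restricts to a biholomorphism $B(f_1)\to B(f_2)$. There is really no major obstacle: the only point that requires care is that $J_{\Phi_i}$ is a nonvanishing polynomial (hence a nonzero constant), which guarantees that the algebraic equation $J_{f_2}=0$ defines the same scheme-theoretic locus as $\Phi_1^{\ast}(J_{f_1}=0)$ and thus no spurious critical points appear.
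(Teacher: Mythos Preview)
Your proof is correct and follows essentially the same approach as the paper's: the chain rule gives $J_{f_2}=J_{\Phi_2}\cdot J_{f_1}\cdot J_{\Phi_1}$, from which $\mathrm{Crit}(f_2)=\Phi_1^{-1}(\mathrm{Crit}(f_1))$ and $B(f_2)=\Phi_2(B(f_1))$ follow immediately. Your argument is slightly more detailed (making explicit that the Jacobians of the automorphisms are nonzero constants, and spelling out the preimage count for the topological degree), but the substance is identical.
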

\begin{proof}
Assume that \eqref{eq:equiv} holds. Since $\Phi_1$ and $\Phi_2$
have topological degree $1$, it follows that $f_1$ and $f_2$ have
the same topological degree. By the chain rule we have
\begin{equation*}
J_{f_2}=J_{\Phi_2} \cdot J_{f_1} \cdot J_{\Phi_1},
\end{equation*}
so we obtain
\begin{equation*}
\textrm{Crit}(f_2)=\Phi_1^{-1}(\textrm{Crit}(f_1)), \quad
B(f_2)=\Phi_2(B(f_1))
\end{equation*}
and this completes the proof.
\end{proof}

\begin{definition}
We say that a polynomial map $f \colon \mathbb{C}^2 \lr
\mathbb{C}^2$ is \emph{semi-separate} if it is of the form
\begin{equation*}
f(x, \, y)=(x, \, Q(x, y)),
\end{equation*}
where $Q(x,y) \in \mathbb{C}[x, \,y]$. In particular we say that it
is \emph{separate} if it is of the form
\begin{equation*}
f(x, \, y)=(x, \, P(y)),
\end{equation*}
where $P(y) \in \mathbb{C}[y]$.
\end{definition}
Recall that a polynomial $Q(x,\,y) \in \mathbb{C}[x, \,y]$ is
called \emph{monic with respect to} $y$ if
\begin{equation*}
Q(x, \,y)=ay^n+ \textrm{terms of lower degree in }y, \quad a \in
\mathbb{C}^*.
\end{equation*}
By Proposition \ref{prop:Jel} it follows that a semi-separate
polynomial map $f$ is proper if and only if $Q(x, \, y)$ is monic
with respect to $y$; in this case, up to a dilation we may assume
that $f$ has the form
\begin{equation} \label{eq:semi-sep}
f(x, \, y)=(x, \, y^d+ q_{d-1}(x)y^{d-1}+ \cdots + q_0(x)),
\end{equation}
where $d$ is the topological degree. Notice that the Jacobian of
\eqref{eq:semi-sep} is
\begin{equation} \label{eq:crit-semisep}
J_f(x, \, y)=dy^{d-1}+(d-1)q_{d-1}(x)y^{d-2}+ \cdots + q_1(x).
\end{equation}
For example, let us consider the case of a general semi-separate
map $f \colon \mathbb{C}^2 \lr \mathbb{C}^2$ of topological degree
$3$. By using a linear transformation we can get rid of
 the term in $y^2$; therefore, up to equivalence, $f$ has the
form
\begin{equation*} \label{eq:d=3}
f(x, \, y)=(x, \, y^3+p(x)y+q(x)).
\end{equation*}
Then $\textrm{Crit}(f)$ has equation
\begin{equation*}
3y^2+p(x)=0,
\end{equation*}
whereas $B(f)$ has equation
\begin{equation*}
y^2-2q(x)y+ \frac{\Delta(x)}{27}=0,
\end{equation*}
where $\Delta(x):=27q(x)^2+4p(x)^3$ is the discriminant of
$y^3+p(x)y+q(x)$. In particular, taking $p(x)=x$ and $q(x)=0$, we
obtain the \emph{Whitney map}
\begin{equation*}
f(x, \, y)=(x, \, y^3+xy),
\end{equation*}
whose branch locus is the cuspidal cubic curve of equation
$4x^3+27y^2=0$.
\begin{remark} \label{rem:Witney-no-sep}
By \eqref{eq:crit-semisep} it follows that the branch locus of a separate map
is a disjoint union of lines. Therefore the previous computations together with
Proposition \emph{\ref{crit-bihol}} show that the Whitney map is not equivalent to
a separate one.
\end{remark}
The following lemma will be used in the proof of Theorem
A, see Section \ref{sec:proofs}.

\begin{lemma} \label{divides}
Let $f \colon \mathbb{C}^2 \lr \mathbb{C}^2$ be a semi-separate map
as in \eqref{eq:semi-sep}, with $d \geq 3$. If there exist two
polynomials $H_1(x, \, y)$, $H_2(x, \, y)$ such that
\begin{equation*}
J_f(x, \, y) = H_1(x, \, y)^{d-2}H_2(x,y),
\end{equation*}
then both affine curves $V(H_1)$ and $V(H_2)$ are biholomorphic to
$\mathbb{C}$.
\end{lemma}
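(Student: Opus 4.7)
The plan is to pin down the $y$-degrees of $H_1$ and $H_2$ using the explicit form of $J_f$ recorded in \eqref{eq:crit-semisep}, and then to show that the only possibility consistent with $V(H_1)$ and $V(H_2)$ being genuine affine curves is that each $H_i$ cuts out the graph of a polynomial $y=a_i(x)$, which is trivially biholomorphic to $\mathbb{C}$ via the projection onto the $x$-axis.

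First I would observe that $J_f$, as a polynomial in $y$, has degree $d-1$ with leading coefficient the \emph{nonzero constant} $d$; in particular $J_f(x_0, y)$ is a nonzero polynomial in $y$ for every $x_0 \in \mathbb{C}$. Setting $a:=\deg_y H_1$ and $b:=\deg_y H_2$, the factorization $J_f = H_1^{d-2} H_2$ yields the numerical identity $(d-2)a + b = d-1$. Since each $V(H_i)$ is an actual affine curve, both $H_i$ must be non-constant. I would then rule out $a=0$ by noting that in that case $H_1 \in \mathbb{C}[x]$ would be a non-constant polynomial in $x$ alone, hence would admit some root $x_0 \in \mathbb{C}$, and then $J_f(x_0, y) = H_1(x_0)^{d-2} H_2(x_0, y) \equiv 0$ in $y$, contradicting the preceding observation. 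The symmetric argument gives $b \geq 1$.

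With $a, b \geq 1$ and $d \geq 3$, the equation $(d-2)a + b = d-1$ has the unique solution $a = b = 1$, since $a \geq 2$ would force $b \leq 3-d \leq 0$. Writing $H_1 = p(x) y + q(x)$ and $H_2 = r(x) y + s(x)$, the equality of leading $y$-coefficients on both sides of $J_f = H_1^{d-2} H_2$ reads $p(x)^{d-2} r(x) = d$, so $p(x)$ and $r(x)$ must both be nonzero constants in $\mathbb{C}^{\ast}$. Hence, up to rescaling, $H_1 = y - a(x)$ and $H_2 = y - b(x)$ for suitable polynomials $a, b \in \mathbb{C}[x]$, and $V(H_1)$, $V(H_2)$ are the graphs of $a$ and $b$, each biholomorphic to $\mathbb{C}$ via $(x,y) \mapsto x$.

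The proof is essentially a degree bookkeeping, so I do not foresee any serious obstacle; the crucial ingredient, used twice, is simply that the leading coefficient of $J_f$ as a polynomial in $y$ is the nonzero constant $d$ rather than a non-constant polynomial in $x$, which is what prevents $V(H_1)$ or $V(H_2)$ from acquiring vertical components or higher $y$-degree.
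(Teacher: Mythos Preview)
Your proof is correct and follows essentially the same approach as the paper: both use that $J_f$ has $y$-degree $d-1$ with constant leading coefficient $d$ to force $\deg_y H_1 = \deg_y H_2 = 1$ with constant leading coefficients, so that each $V(H_i)$ is the graph $y=h_i(x)$ of a polynomial. Your write-up is in fact slightly more careful, making explicit why neither $H_i$ can lie in $\mathbb{C}[x]\setminus\mathbb{C}$ and spelling out the degree equation $(d-2)a+b=d-1$.
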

\begin{proof}
By using \eqref{eq:crit-semisep}
 we can write
\begin{equation} \label{eq:crit-semisep-2}
dy^{d-1}+(d-1)q_{d-1}(x)y^{d-2}+ \cdots + q_1(x)=H_1(x,
\,y)^{d-2}H_2(x, \,y).
\end{equation}
The left-hand side of \eqref{eq:crit-semisep-2} is monic with
respect to $y$, so it cannot be divided by a polynomial in $x$. It
follows that both $H_1$ and $H_2$ contain $y$. Therefore, by
comparing the degrees, it follows that both $H_1$ and $H_2$ are
monic of degree $1$ in $y$, that is we may assume
\begin{equation*}
H_1(x, \, y)=y+h_1(x), \quad H_2(x, \, y)=dy+h_2(x),
\end{equation*}
for some $h_1(x), \, h_2(x) \in \mathbb{C}[x]$. This completes the
proof.
 \end{proof}

\subsection{Milnor number of a plane curve singularity}
In this subsection we summarize without proofs the definition and
the properties of the Milnor number of a plane curve singularity.
For further details we refer the reader to \cite[Chapter 1]{Lo84}
 and \cite[Chapter 3]{dJP00}. \\
Let $\mathbb{C} \{ x, \, y \}$ be the ring of convergent power
series in two variables; it is a local ring whose maximal ideal
$\mathfrak{m}$ consists of series with zero constant term, that is
of series vanishing at the point $o=(0, \,0)$.

\begin{definition}
A \emph{plane curve singularity} $X$ is a germ of an analytic
space $(V(F), \, o)$, where $F \in \mathfrak{m} \subset
\mathbb{C}\{x, \, y \}$.
\end{definition}

\begin{definition}
Let $X=(V(F), \, o)$ be a plane curve singularity. We define the
\emph{Milnor number } $\mu(X,\, o)$ by
\begin{equation*}
\mu(X, \, o):=\emph{dim}_{\mathbb{C}} \frac{\mathbb{C} \{ x, \, y\}
} {\big( \frac{\partial F}{\partial x}, \, \frac{\partial
F}{\partial y} \big)}.
\end{equation*}
\end{definition}

\begin{theorem} \label{milnor}
The Milnor number is well defined and
 it is an invariant of the singularity. Moreover
$\mu(X, \, o) < + \infty$
 if and only if $X$ is a germ of an \emph{isolated} plane curve
singularity.
\end{theorem}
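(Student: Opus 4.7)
The plan is to verify the statement in three steps: well-definedness of $\mu(X, o)$, its invariance under the natural equivalence of germs, and the characterization of finiteness in terms of isolatedness. Well-definedness is essentially tautological, since the Milnor algebra $\mathbb{C}\{x, y\}/(\partial F/\partial x, \partial F/\partial y)$ is by construction a $\mathbb{C}$-vector space, so its dimension is an unambiguous element of $\mathbb{N} \cup \{+\infty\}$.

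For invariance under a biholomorphism $\phi \colon (\mathbb{C}^2, o) \to (\mathbb{C}^2, o)$, I would set $G = F \circ \phi$ and apply the chain rule to express each of $G_x, G_y$ as a $\mathbb{C}\{x,y\}$-linear combination of $\phi^\ast F_x$ and $\phi^\ast F_y$, with coefficient matrix equal to the transpose Jacobian of $\phi$. Since $\phi$ is a local biholomorphism, this matrix is invertible over $\mathbb{C}\{x,y\}$, so $(G_x, G_y) = \phi^\ast(F_x, F_y)$ as ideals, and the induced ring isomorphism $\phi^\ast$ descends to an isomorphism of the two Milnor algebras. Hence $\mu(V(G), o) = \mu(V(F), o)$, which is the required invariance.

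For the finiteness criterion, the key algebraic ingredient is the standard local-algebra fact that for the two-dimensional regular local ring $\mathbb{C}\{x,y\}$ with maximal ideal $\mathfrak{m}$, a proper ideal $I$ satisfies $\dim_{\mathbb{C}} \mathbb{C}\{x,y\}/I < +\infty$ if and only if $I$ is $\mathfrak{m}$-primary, which by the Nullstellensatz is in turn equivalent to $V(I) = \{o\}$ as a germ at $o$. Applied with $I = (F_x, F_y)$, the problem reduces to showing that $V(F_x, F_y) = \{o\}$ as a germ if and only if $o$ is an isolated point of the singular locus $\mathrm{Sing}(V(F)) = V(F, F_x, F_y)$. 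One direction is immediate from the inclusion $V(F, F_x, F_y) \subset V(F_x, F_y)$. For the converse, I would argue by contradiction: if $V(F_x, F_y)$ contains a curve germ $C$ through $o$, then $dF$ vanishes identically on $C$, so $F$ is locally constant along the smooth part of $C$; since $C$ is connected and $F(o) = 0$, this forces $F \equiv 0$ on $C$, producing a one-parameter family of singular points through $o$. The main obstacle is this last step, since one must handle possible reducibility or non-reducedness of $V(F_x, F_y)$, but in the two-dimensional regular setting every non-$\mathfrak{m}$-primary ideal cuts out at least a one-dimensional germ, which supplies the required curve.
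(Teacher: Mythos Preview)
The paper does not prove this theorem: the subsection opens with ``we summarize without proofs the definition and the properties of the Milnor number'' and simply cites \cite{Lo84} and \cite{dJP00}. So there is no argument in the paper to compare yours against; you are filling in background the authors chose to quote.

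Your treatment of the finiteness criterion is correct and is the standard route: reduce $\mu<\infty$ to $\mathfrak m$-primariness of $(F_x,F_y)$ via the analytic Nullstellensatz, then show that a curve germ in $V(F_x,F_y)$ must lie in $V(F)$ by differentiating along a parametrization. Your chain-rule computation for invariance under an ambient biholomorphism $\phi$ is also fine.

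There is, however, a genuine gap in the invariance step. You only treat $G=F\circ\phi$, but two defining equations of the \emph{same} germ $(V(F),o)$ differ by a unit, $G=uF$ with $u\in\mathbb C\{x,y\}^\times$, and more generally an isomorphism of plane-curve germs is realized by $G\circ\phi=uF$. For $G=uF$ one has $(uF)_x=uF_x+u_xF$ and $(uF)_y=uF_y+u_yF$, so the two Jacobian ideals differ by terms in $(F)$; this immediately gives $(F,F_x,F_y)=(G,G_x,G_y)$, i.e.\ equality of Tjurina numbers, but \emph{not} equality of $(F_x,F_y)$ and $(G_x,G_y)$ as ideals, and hence not directly $\mu(F)=\mu(G)$. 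Closing this gap requires an extra ingredient beyond the chain rule, for instance the topological interpretation of $\mu$ via the Milnor fibre, a deformation argument along the family $F_t=(1-t+tu)F$, or (specific to plane curves) Milnor's formula $\mu=2\delta-r+1$, each of which makes the dependence on the germ alone manifest. That said, for the only use in this paper (the proof of Theorem~B, where one compares $\mu$ for explicitly given polynomials under biholomorphism), the invariance you do establish is exactly what is needed.
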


\begin{example} \label{ex:milnor}
Set $F_{d,\,n}(x,\,y)= y^{d} - x^n$ with $d,\, n \ge 2.$ The point
$o=(0,0)$ is the only singularity of the affine curve $C_{d,
\,n}=V(F_{d, \, n})$, and the corresponding Milnor number is given
by
\begin{equation*}
\mu_{d,\,n}:=\mu(C_{d, \, n}, \, o)= \dim_{\mathbb{C}}
\frac{\mathbb{C}\{x, \, y\}}{(x^{n-1}, \, y^{d-1})}=(d-1)(n-1).
\end{equation*}
\end{example}

\section{Proofs of Theorems A and B} \label{sec:proofs}
We start by proving Theorem A.
\begin{teoA}
For every $d \geq 3$ there exists at least one proper polynomial map
$f \colon \mC^2 \lr \mC^2$ of topological degree $d$ such that $f$
is not equivalent to a map of type $(x, \, y) \lr (x, \, Q(x,y))$.
\end{teoA}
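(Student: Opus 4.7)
My plan is to apply Lemma \ref{divides} contrapositively. For each $d \geq 3$, I aim to exhibit an explicit proper polynomial map $f \colon \mC^2 \to \mC^2$ of topological degree $d$ whose Jacobian admits a factorization
\begin{equation*}
J_f \;=\; \tilde H_1^{d-2}\,\tilde H_2
\end{equation*}
in which at least one of the affine curves $V(\tilde H_1)$, $V(\tilde H_2)$ carries an isolated singular point of positive Milnor number; by Theorem \ref{milnor} such a singular curve cannot be biholomorphic to the smooth affine line $\mC$.

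Granted such an $f$, the non-equivalence argument is purely formal. Suppose for contradiction that $f = \Phi_2\circ g\circ \Phi_1$ with $g$ semi-separate and $\Phi_1,\Phi_2 \in \mathrm{Aut}(\mC^2)$. Polynomial automorphisms of $\mC^2$ have nonzero constant Jacobian, so the chain-rule computation from the proof of Proposition \ref{crit-bihol} gives $J_f = c\,(J_g\circ \Phi_1)$ for some $c\in \mC^*$, equivalently $J_g = c^{-1}\,(J_f\circ \Phi_1^{-1})$. The factorization of $J_f$ therefore transports to a factorization
\begin{equation*}
J_g \;=\; c^{-1}\,(\tilde H_1\circ \Phi_1^{-1})^{d-2}\,(\tilde H_2\circ \Phi_1^{-1})
\end{equation*}
of $J_g$ of precisely the shape required by Lemma \ref{divides}; that lemma forces both $V(\tilde H_i \circ \Phi_1^{-1}) \cong \mC$. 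Since $\Phi_1^{-1}$ restricts to a biholomorphism $V(\tilde H_i)\to V(\tilde H_i\circ \Phi_1^{-1})$, we deduce $V(\tilde H_i)\cong \mC$ for both $i$, contradicting the choice of $f$.

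The main obstacle lies in producing $f$ explicitly. It must meet four conditions at once: (a) $J_f$ is divisible by a nontrivial $(d-2)$-th power $\tilde H_1^{d-2}$; (b) $f$ is proper, to be verified via Proposition \ref{prop:Jel} by exhibiting an integral relation satisfied by $y$ (say) over $\mC[f_1,f_2]$; (c) the topological degree of $f$ is exactly $d$; and (d) at least one of $V(\tilde H_1)$, $V(\tilde H_2)$ is singular. The cleanest way to arrange (d) is to force one factor to cut out a cuspidal curve of the form $V(y^k - x^n)$ with $k,n\geq 2$ and $\gcd(k,n)=1$, whose isolated singularity at the origin has Milnor number $(k-1)(n-1)>0$ by Example \ref{ex:milnor}, certifying $V(y^k-x^n)\not\cong \mC$. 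Balancing the degree constraints imposed by (a)–(c) — in particular the fact that $\deg J_f \leq \deg f_1 + \deg f_2 - 2$ while the topological degree being $d$ constrains $\deg f_1$ and $\deg f_2$ through B\'ezout — is the genuinely delicate step, and it forces the candidate $f$ to be non-semi-separate, which is precisely what makes Theorem A nontrivial.
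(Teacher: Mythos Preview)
Your reduction is sound: once you have a proper map $f$ of topological degree $d$ whose Jacobian factors as $\tilde H_1^{d-2}\tilde H_2$ with one of the $V(\tilde H_i)$ not biholomorphic to $\mC$, the transport-of-factorization argument via Proposition~\ref{crit-bihol} and Lemma~\ref{divides} is exactly what the paper does, and you have written it out correctly.

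The genuine gap is that you never actually produce $f$. You identify the four constraints (a)--(d) and then stop, calling the balancing ``genuinely delicate''. As written, this is a plan of attack, not a proof: Theorem~A is an existence statement, and the entire content lies in the explicit construction. Moreover, the route you propose for (d) --- forcing one factor to be a cuspidal curve $y^k-x^n$ with $k,n\geq 2$ --- makes the construction harder than necessary. An irreducible factor of $J_f$ of degree $\geq 2$ that is \emph{singular} is a strong requirement, and it is not clear your Bézout/degree bookkeeping can be made to close for all $d\geq 3$.

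The paper sidesteps this by taking a \emph{smooth} obstruction instead: it suffices that one $V(\tilde H_i)$ be biholomorphic to $\mC^*$ rather than $\mC$, and that is much easier to arrange. Concretely, the paper uses
\[
f_d(x,y)=(x+y+xy,\; x^{d-1}y),
\]
checks properness and degree $d$ directly via Proposition~\ref{prop:Jel} (both $x$ and $y$ satisfy explicit monic degree-$d$ equations over $\mC[f_1,f_2]$), and computes
\[
J_{f_d}(x,y)=x^{d-2}\bigl((2-d)xy+x-(d-1)y\bigr).
\]
For $d\geq 3$ the second factor is a nondegenerate conic, hence $V(H_2)\cong\mC^*$, and Lemma~\ref{divides} finishes the argument exactly as in your reduction. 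If you want to salvage your singular-curve idea you would need to supply an equally explicit family; otherwise, the $\mC^*$ trick is both simpler and already sufficient.
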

\begin{proof}
Consider the polynomial map $f_d \colon \mathbb{C}^2 \lr
\mathbb{C}^2$ defined as follows:
\begin{equation*}
f_d(x, y):=(x+y+xy, \, x^{d-1} y).
\end{equation*}
\begin{claim}
The map $f_d$ is proper and has topological degree $d$ for all $d
\geq 2$.
\end{claim}
Indeed, look at the push-forward map
\begin{equation*}
\begin{split}
f_{d \,\ast} \colon \mathbb{C}[s, \, t] & \lr \mathbb{C}[x, \, y] \\
& s \lr x+y+xy \\
& t \lr x^{d-1}y.
\end{split}
\end{equation*}
The element $x \in \mathbb{C}[x, \,y]$ satisfies the monic equation
of degree $d$
\begin{equation*}
X^d -sX^{d-1}+tX+t=0.
\end{equation*}
Analogously, the element $y$ satisfies the monic equation
\begin{equation*}
Y(s-Y)^{d-1} -t(1+Y)^{d-1}=0.
\end{equation*}
This shows that $f_{d \, \ast}\mathbb{C}[s, \, t] \subset
\mathbb{C}[x,\, y]$ is a integral extension of rings of degree $d$,
hence Proposition \ref{prop:Jel} implies that $f_d$ is a proper map
 of degree $d$. This proves our claim. \\ \\
Now we want to show that $f_d$ is not equivalent to any
semi-separate map
for all $d \geq 3$. \\
The Jacobian $J_{f_d}(x, \, y)$ splits as
\begin{equation}
J_{f_d}(x, \, y)=H_1(x, \, y)^{d-2}H_2(x, \, y),
\end{equation}
where
\begin{equation*}
H_1(x, \, y)=x, \quad H_2(x, \, y)= (2-d)xy+x-(d-1)y.
\end{equation*}
For all $d \geq 3$, the conic $V(H_2)$ is biholomorphic to
$\mathbb{C}^{\ast}$. Since $\mathbb{C}$ and $\mathbb{C}^{\ast}$ are
obviously not biholomorphic, it follows by Proposition
\ref{crit-bihol} and Lemma \ref{divides} that there exists no
semi-separate map equivalent to $f_d$, for all $d \geq 3$. This
concludes the proof of Theorem A.
\end{proof}

\begin{remark}
For $d=2$ the map $f_d$ is equivalent to a semi-separate one. Indeed,
 consider  $\Phi_1 , \, \Phi_2 \in \emph{Aut}(\mathbb{C}^2)$ defined by
\begin{equation*}
\Phi_1(x, \, y)=\bigg(\frac{x+y}{2}, \, \frac{x-y}{2} \bigg), \quad
\Phi_2(x, \, y)=(x^2+2x-y,  \, x^2-y).
\end{equation*}
\end{remark}
Then we have
\begin{equation*}
 f_2(x, \, y)=\Phi_2 \circ \tilde{f} \circ \Phi_1(x,y),
\end{equation*}
where $\tilde{f}(x, \, y)=(x, \, y^2)$, in accordance
with Theorem \ref{teo:Lamy}. \\ \\
Now let us prove Theorem B.
\begin{teoB}
For all positive integers $d, \, n$, with $d \geq 3$ and $n \geq 2$,
consider the polynomial map $f_{d, \,n} \colon \mathbb{C}^2 \lr
\mathbb{C}^2$ given by
\begin{equation*}
f_{d, \, n}(x,\,y):=(x, \, y^d-dx^ny).
\end{equation*}
Then $f_{d, \,n}$ and $f_{d, \, m}$ are equivalent if and only if
$m=n$. It follows that there exist infinitely many different
equivalence classes of proper polynomial maps $\mC^2 \lr \mC^2$ of
topological degree $d$.
\end{teoB}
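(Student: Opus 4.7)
The plan is to distinguish the maps $f_{d,n}$ via an analytic invariant of the critical locus, specifically the Milnor number of its unique singular point. The whole argument reduces to combining Proposition \ref{crit-bihol} with the computation in Example \ref{ex:milnor}.

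First I would verify that each $f_{d,n}$ is indeed a proper polynomial map of topological degree $d$, so that the statement makes sense. Applying Proposition \ref{prop:Jel} to the push-forward $s\mapsto x,\ t\mapsto y^d-dx^n y$: the element $x$ equals $s$ and the element $y$ satisfies the monic relation $y^d - d s^n y - t = 0$ of degree $d$, so the ring extension is integral of degree $d$. Hence $f_{d,n}$ is proper with topological degree $d$.

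Next I would compute the critical locus. A direct determinant calculation gives
\begin{equation*}
J_{f_{d,n}}(x,y) \;=\; d\bigl(y^{d-1}-x^n\bigr),
\end{equation*}
so $\mathrm{Crit}(f_{d,n}) = V(y^{d-1}-x^n)$. Since $d\geq 3$ and $n\geq 2$, the partial derivatives $-nx^{n-1}$ and $(d-1)y^{d-2}$ vanish simultaneously only at the origin, so $o=(0,0)$ is the unique singular point of $\mathrm{Crit}(f_{d,n})$. Example \ref{ex:milnor} then applies with exponents $(d-1,n)$ and yields
\begin{equation*}
\mu\bigl(\mathrm{Crit}(f_{d,n}),\,o\bigr) \;=\; (d-2)(n-1).
\end{equation*}

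Finally I would invoke Proposition \ref{crit-bihol}: if $f_{d,n}$ and $f_{d,m}$ are equivalent, then $\mathrm{Crit}(f_{d,n})$ and $\mathrm{Crit}(f_{d,m})$ are biholomorphic as analytic spaces. A biholomorphism must send the unique singular point to the unique singular point, and by Theorem \ref{milnor} the Milnor number is an invariant of the analytic germ. Therefore $(d-2)(n-1) = (d-2)(m-1)$, and since $d\geq 3$ forces $d-2\geq 1$ we conclude $n=m$. The ``infinitely many classes'' statement is then immediate by letting $n$ vary.

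There is no real obstacle here, since all the heavy lifting is already packaged in Proposition \ref{crit-bihol} and Example \ref{ex:milnor}; the only point requiring mild care is confirming that $o$ is the unique singular point of $V(y^{d-1}-x^n)$, so that comparing Milnor numbers at the distinguished singular points of the two critical loci is legitimate. Everything else is a direct unpacking of the definitions.
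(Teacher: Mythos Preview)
Your proof is correct and follows essentially the same approach as the paper: compute the critical locus $V(y^{d-1}-x^n)$, note that its unique singular point has Milnor number $(d-2)(n-1)$ by Example~\ref{ex:milnor}, and conclude via Proposition~\ref{crit-bihol} and Theorem~\ref{milnor}. You include a couple of extra verifications (properness and the explicit Jacobian) that the paper leaves implicit, but the core argument is identical.
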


\begin{proof}
The critical locus of the map $f_{d, \,n}$ is the affine curve
$C_{d-1, \, n}$ of equation $y^{d-1}-x^n=0$, whose unique
singularity is $o=(0, \, 0)$. The Milnor number of $C_{d-1, \, n}$
in $o$ is given by
\begin{equation*}
\mu_{d-1, \, n}=\mu(C_{d-1,\, n}, \,o)=(d-2)(n-1),
\end{equation*}
see Example \ref{ex:milnor}. Hence $\mu_{d-1,\, n}=\mu_{d-1, \,m}$
if and only if $m=n$. It follows by Theorem \ref{milnor} that the
curves $C_{d-1, \,n}$ and $C_{d-1, \,m}$ are not biholomorphic
 if $m \neq n$, therefore Proposition \ref{crit-bihol} implies that
 $f_{d, \, n}$ and $f_{d, \, m}$
are not equivalent if $m \neq n$. \\
This concludes the proof of Theorem B.
\end{proof}

\section{The case of Galois coverings} \label{sec:Galois}

Let $f \colon \mathbb{C}^2 \lr \mathbb{C}^2$ be a polynomial map
which is a Galois covering with finite Galois group $G$. By
Proposition \ref{prop:Jel},  $f$ is proper and its topological
degree equals $|G|$; moreover $G \subset
\textrm{Aut}(\mathbb{C}^2)$, and $f$ can be identified with the
quotient map $\mathbb{C}^2 \lr \mathbb{C}^2/G$. Since $G$ is a
finite group, we may assume $G \subset \textrm{GL}(2,\mathbb{C})$
by a polynomial change of coordinates (\cite[Corollary 4.4]{Ka79})
and, since $\mathbb{C}^2/G \cong \mathbb{C}^2$, it follows that
$G$ is a \emph{finite complex reflection group}. Let us denote by
$\mathbb{C}[x,\,y]^{G}$ the subalgebra of $G$-invariant
polynomials; then the following two conditions are equivalent, see
\cite[p.380]{Coh76}:
\begin{itemize}
\item[$(i)$]there are two algebraically independent homogenous
polynomials $\phi_1, \, \phi_2 \in \mathbb{C}[x,\, y]^{G}$ which
satisfy $|G|=\textrm{deg}(\phi_1) \cdot \textrm{deg}(\phi_2)$;
\item[$(ii)$] there are two algebraically independent homogeneous
polynomials $\phi_1, \, \phi_2 \in \mathbb{C}[x, \, y]^{G}$ such
that $1$, $\phi_1$, $\phi_2$  generate
 $\mathbb{C}[x,\,y]^{G}$ as an algebra over $\mathbb{C}$.
\end{itemize}
We say that $\phi_1, \, \phi_2$ are a \emph{basic set of invariants}
for $G$. Furthermore, putting $d_1:=\textrm{deg}(\phi_1)$,
$d_2:=\textrm{deg}(\phi_2)$, the set $\{d_1, \, d_2\}$ is
independent of the particular choice of $\phi_1,\, \phi_2$. We call
$d_1$, $d_2$ the \emph{degrees} of $G$.

\begin{proposition} \label{basic set equivalent}
Let $\phi_1, \, \phi_2$ and $\psi_1, \, \psi_2$ be two basic sets of
invariants for $G$. Then the two polynomial maps $\phi, \, \psi \
 \colon \mathbb{C}^2 \lr \mathbb{C}^2$ defined by
\begin{equation*}
\begin{split}
\phi(x,\, y)&=(\phi_1(x, \,y), \, \phi_2(x, \, y)),\\
\psi(x,\,y)&=(\psi_1(x, \,y), \, \psi_2(x, \, y))
\end{split}
\end{equation*}
are equivalent.
\end{proposition}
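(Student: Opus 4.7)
The plan is to take $\Phi_1 = \mathrm{id}$ and to construct $\Phi_2$ as the polynomial change of coordinates that expresses one basic set of invariants in terms of the other. The algebra‐generation property in condition $(ii)$ is precisely what makes this possible, and algebraic independence will give invertibility.

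First I would use condition $(ii)$ in the form that $\mathbb{C}[x,y]^{G}$ is the polynomial algebra $\mathbb{C}[\phi_1,\phi_2]$. Since $\psi_1,\psi_2 \in \mathbb{C}[x,y]^{G}$, there exist polynomials $P_1,P_2 \in \mathbb{C}[s,t]$ with
\begin{equation*}
\psi_i(x,y) = P_i\bigl(\phi_1(x,y), \, \phi_2(x,y)\bigr), \qquad i=1,2.
\end{equation*}
Symmetrically, $\mathbb{C}[x,y]^{G} = \mathbb{C}[\psi_1,\psi_2]$, so there exist $Q_1,Q_2 \in \mathbb{C}[s,t]$ with $\phi_i = Q_i(\psi_1,\psi_2)$.

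Next I would verify that the polynomial self-map $\Phi_2(s,t) := (P_1(s,t), P_2(s,t))$ is an element of $\mathrm{Aut}(\mathbb{C}^2)$. Substituting, we get the identity in $\mathbb{C}[x,y]$
\begin{equation*}
\phi_i(x,y) = Q_i\bigl(P_1(\phi_1,\phi_2), \, P_2(\phi_1,\phi_2)\bigr),
\end{equation*}
and since $\phi_1,\phi_2$ are algebraically independent, this lifts to the polynomial identity $s = Q_1(P_1(s,t),P_2(s,t))$ and $t = Q_2(P_1(s,t),P_2(s,t))$ in $\mathbb{C}[s,t]$. Therefore $(s,t) \mapsto (Q_1(s,t),Q_2(s,t))$ is a two-sided polynomial inverse of $\Phi_2$, so $\Phi_2 \in \mathrm{Aut}(\mathbb{C}^2)$.

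Finally, unwinding definitions, $\psi(x,y) = (P_1(\phi_1,\phi_2), P_2(\phi_1,\phi_2)) = \Phi_2(\phi(x,y))$, so $\psi = \Phi_2 \circ \phi \circ \mathrm{id}$, giving the required equivalence in the sense of Definition \ref{def:equiv}.

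The only delicate step is the invertibility of $\Phi_2$; everything else is essentially a translation between ``generates as an algebra'' and ``change of polynomial coordinates on the target $\mathbb{C}^2$''. The main obstacle, if any, is remembering that condition $(ii)$ supplies not just algebra generation but that $\phi_1,\phi_2$ are algebraically independent, which is exactly what turns the generation identity into the polynomial identity needed to invert $\Phi_2$ on $\mathbb{C}[s,t]$ (and hence on $\mathbb{C}^2$).
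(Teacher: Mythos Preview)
Your argument is correct and more conceptual than the paper's. One small remark: you only explicitly verified $Q \circ \Phi_2 = \mathrm{id}$; the claim that $(Q_1,Q_2)$ is a \emph{two-sided} inverse needs the symmetric substitution $\psi_i = P_i(Q_1(\psi_1,\psi_2),Q_2(\psi_1,\psi_2))$ together with the algebraic independence of $\psi_1,\psi_2$. This is entirely parallel to what you wrote, so it is not a real gap, but it is worth stating.

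The paper takes a different, more hands-on route. It exploits the homogeneity of the basic invariants and the fact that the unordered pair of degrees $\{d_1,d_2\}$ is the same for any basic set. Writing $\phi_i$ as a polynomial in $\psi_1,\psi_2$ and comparing homogeneous degrees forces $\Phi$ to have one of three very explicit shapes, depending on whether $d_1 \nmid d_2$, $d_1 \mid d_2$ with $d_1 \neq d_2$, or $d_1 = d_2$: respectively a diagonal linear map, a de Jonqui\`eres map $(ax,\,cx^s+dy)$, or a general element of $\mathrm{GL}(2,\mathbb{C})$. Invertibility is then read off from the form. Your approach bypasses all of this case analysis and does not use homogeneity at all, so it is shorter and in fact proves a slightly more general statement (it works for any two pairs of algebraically independent algebra generators of $\mathbb{C}[x,y]^G$, homogeneous or not). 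What the paper's proof buys in exchange is an explicit description of $\Phi_2$, which is occasionally useful downstream when one wants to know exactly how two basic sets differ.
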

\begin{proof}
Set
\begin{equation*}
d_1=\deg(\phi_1)=\deg(\psi_1), \quad
d_2=\deg(\phi_2)=\deg(\psi_2),
\end{equation*}
with $d_1 \leq d_2$. Since both $\{1, \, \phi_1, \phi_2 \}$ and
$\{1, \psi_1, \psi_2 \}$ generate $\mathbb{C}[x, \, y]^{G}$, we may
express both $\phi_1$ and $\phi_2$ as polynomials in $\psi_1,
\psi_2$. Looking at the degrees, one sees that there are three cases. \\
$\bullet$ If $d_1 \nmid d_2$, then there exist $a, \, b \in
\mathbb{C}^*$ such that
\begin{equation*}
\phi_1=a \psi_1, \quad \phi_2= b \psi_2.
\end{equation*}
Set $\Phi(x, \, y)=(ax, \, by)$. \\
$\bullet$ If $d_1 | d_2$ and $d_1 \neq d_2$, set $s=d_2/d_1$. Then
there exist $a, \, c, \, d \in \mathbb{C},$ $ad \neq 0,$ such that
\begin{equation*}
\phi_1=a \psi_1, \quad \phi_2= c\psi_1^s+d \psi_2.
\end{equation*}
Set $\Phi(x, \, y)=(ax, \, cx^s+dy)$. \\
$\bullet$ If $d_1=d_2$, then there exist $a, \, b, \, c, \, d \in
\mathbb{C},$ $(ad-bc) \neq 0,$ such that
\begin{equation*}
\phi_1=a \psi_1 + b \psi_2, \quad \phi_2= c\psi_1+d \psi_2.
\end{equation*}
Set $\Phi(x, \, y)=(ax+by, \, cx+dy)$. \\ \\
In all cases $\Phi \in
\textrm{Aut}(\mathbb{C}^2)$ and $\phi=\Phi \circ \psi$. This
completes the proof.
\end{proof}
\begin{corollary} \label{cor:basic set}
Let $f \colon \mathbb{C}^2 \lr \mathbb{C}^2$ be a Galois covering with
finite Galois group $G$. Then $f$ is equivalent to the map $\phi(x,
y)=(\phi_1(x, \,y), \, \phi_2(x, \, y))$, where $\phi_1, \,\phi_2$
is any basic set of invariants for $G$.
\end{corollary}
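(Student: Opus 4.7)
The plan is to reduce first to the case where $G$ acts linearly on $\mathbb{C}^2$, then realise $f$ as the composition of the ``canonical'' quotient map associated to a basic set of invariants with a polynomial endomorphism of the target, and finally verify that this endomorphism is an automorphism. For the reduction, I would invoke Kambayashi's theorem exactly as in the preamble to Section \ref{sec:Galois} to pick $\Phi \in \mathrm{Aut}(\mathbb{C}^2)$ conjugating $G$ into $\mathrm{GL}(2,\mathbb{C})$; then $f \circ \Phi$ is a Galois covering whose group $\Phi^{-1} G \Phi$ is a finite complex reflection group. Since the equivalence relation of Definition \ref{def:equiv} is transitive, it suffices to treat the case in which $G$ is already a finite complex reflection group acting linearly, and I assume this from now on.

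Next I fix a basic set of invariants $\phi_1, \phi_2$ for $G$ and set $\phi(x,y) = (\phi_1(x,y), \phi_2(x,y))$. Writing $f(x,y) = (f_1(x,y), f_2(x,y))$, each $f_i$ is a $G$-invariant polynomial because $f$ is constant along $G$-orbits, so by property $(ii)$ recalled in the paper there exist $P_1, P_2 \in \mathbb{C}[s,t]$ with $f_i = P_i(\phi_1,\phi_2)$. Setting $\Phi_2(s,t) := (P_1(s,t), P_2(s,t))$ one obtains the factorization $f = \Phi_2 \circ \phi$, and the substantive step is to check that $\Phi_2$ belongs to $\mathrm{Aut}(\mathbb{C}^2)$. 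Both $f$ and $\phi$ are proper polynomial maps of topological degree $|G|$ whose set-theoretic fibres are precisely the $G$-orbits in $\mathbb{C}^2$; consequently $\Phi_2$ is a bijective polynomial self-map of $\mathbb{C}^2$, and therefore an automorphism by the classical fact that a bijective polynomial endomorphism of affine space is invertible.

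Combined with the reduction step, this already gives the desired equivalence between $f$ and the map $\phi$ built from the chosen basic set. For an arbitrary basic set $\psi_1, \psi_2$, Proposition \ref{basic set equivalent} provides an equivalence between $(\psi_1, \psi_2)$ and $(\phi_1, \phi_2)$, so by transitivity $f$ is equivalent to $(\psi_1, \psi_2)$ as well. The main obstacle is precisely the verification that $\Phi_2$ is an automorphism of $\mathbb{C}^2$, and what makes the argument go through is the Chevalley--Shephard--Todd conclusion $\mathbb{C}[x,y]^G = \mathbb{C}[\phi_1, \phi_2]$, which is implicit in the paper's definition of basic set and which guarantees that the $P_i$ exist and assemble into a genuine polynomial map of the target.
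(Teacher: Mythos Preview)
Your proposal is correct and follows essentially the approach the paper leaves implicit: the preamble to Section~\ref{sec:Galois} already asserts that $f$ may be identified with the quotient map $\mathbb{C}^2 \to \mathbb{C}^2/G$ after linearising $G$, and the corollary is then immediate from Proposition~\ref{basic set equivalent}. Your contribution is to make that identification explicit by factoring $f = \Phi_2 \circ \phi$ through the Chevalley--Shephard--Todd description of $\mathbb{C}[x,y]^G$ and checking that $\Phi_2$ is an automorphism via bijectivity of polynomial self-maps of $\mathbb{C}^2$; this is exactly the content the paper takes for granted.
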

It is well known that there exists a unitary inner product on
$\mathbb{C}^2$ invariant under $G$, hence we may assume that $G$ is
a subgroup of the unitary group $\textrm{U}(2)$, see \cite[p.
382]{Coh76}. There are two cases, according whether the
representation $G \subset \textrm{U}(2)$ is reducible or not.

\subsection{The reducible case} \label{sec:reducible}

 Assume that there exists a
$1$-dimensional linear subspace $V \subset \mathbb{C}^2$ which is
invariant under $G$; then its orthogonal complement $V^{\perp}$ is
also invariant (\cite[Chapitre 1]{Se71}), and up to a linear change
of coordinates we may assume $V= \langle e_1 \rangle$,
$V^{\perp}=\langle e_2 \rangle$, where $\{e_1, \, e_2 \}$ is the
canonical basis of $\mathbb{C}^2$. This means that $G$ is generated
by
\begin{equation*}
g_1(x, \,y)=(\theta_m x, \, y), \quad g_2(x, \, y)=(x, \, \theta_n
y),
\end{equation*}
where $\theta_m$ is a primitive $m$-th  root of unity and $\theta_n$
is a primitive $n$-th root of unity, respectively. Therefore we
obtain the following
\begin{proposition} \label{reducible}
Let $G \subset \emph{U}(2)$ be a reducible finite complex reflection
group acting on $\mathbb{C}^2$. Then, up to a change of coordinates,
we are in one of the following cases:
\begin{itemize}
\item[$(1)$] $G=\mathbb{Z}_m$, generated by $g= \left(
\begin{array}{cc}
1 & 0 \\
0 & \exp(2 \pi i/m)
\end{array}
\right)$;
\item[$(2)$] $G=\mathbb{Z}_m \times \mathbb{Z}_n$, generated by \\
$g_1= \left(
\begin{array}{cc}
\exp(2 \pi i/m) & 0 \\
0 & 1
\end{array}
\right)$ \; and \; $g_2= \left(
\begin{array}{cc}
1 & 0 \\
0 & \exp(2 \pi i/n)
\end{array}
\right)$.
\end{itemize}
\end{proposition}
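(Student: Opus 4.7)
The plan is to use two structural facts in sequence: first diagonalise $G$ using the reducibility together with the $G$-invariant unitary inner product, and then use the defining property of a complex reflection group (generation by pseudo-reflections) to restrict the shape of the diagonal entries.

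For the first step, pick a $G$-invariant $1$-dimensional subspace $V\subset\mathbb{C}^2$; by unitarity $V^\perp$ is also $G$-invariant (this is the reference to Serre quoted just before the statement). Choosing an orthonormal basis $\{e_1,e_2\}$ with $e_1\in V$ and $e_2\in V^\perp$, every element of $G$ becomes a diagonal matrix, so $G$ sits inside the torus $\mathrm{U}(1)\times\mathrm{U}(1)$. This is the change of coordinates announced in the proposition: after it, we may work purely with finite subgroups of the diagonal torus.

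For the second step, I would classify the pseudo-reflections inside this diagonal group. A pseudo-reflection is a non-identity element of finite order whose fixed locus is a hyperplane, equivalently a diagonalisable element with eigenvalues $1$ and some root of unity $\zeta\neq 1$. In our coordinates any diagonal pseudo-reflection must therefore be of one of the two forms
\begin{equation*}
\mathrm{diag}(\zeta,1)\quad\text{or}\quad \mathrm{diag}(1,\zeta),
\end{equation*}
i.e.\ it fixes either the $y$-axis or the $x$-axis pointwise. Let
\begin{equation*}
A:=\{\mathrm{diag}(\alpha,1)\in G\},\qquad B:=\{\mathrm{diag}(1,\beta)\in G\}.
\end{equation*}
These are finite subgroups of $\mathrm{U}(1)$, hence cyclic, say $A=\langle g_1\rangle\cong\mathbb{Z}_m$ with $g_1=\mathrm{diag}(e^{2\pi i/m},1)$ and $B=\langle g_2\rangle\cong\mathbb{Z}_n$ with $g_2=\mathrm{diag}(1,e^{2\pi i/n})$.

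For the third step I would conclude. By hypothesis $G$ is generated by its pseudo-reflections, and by step two every pseudo-reflection in $G$ lies in $A\cup B$. Hence $G=\langle A,B\rangle$. Since $A$ and $B$ commute (both diagonal) and intersect trivially, this internal product is in fact a direct product, so $G\cong\mathbb{Z}_m\times\mathbb{Z}_n$ with the two generators displayed in the statement; case $(1)$ arises when $m=1$ or $n=1$ (after possibly swapping coordinates), and case $(2)$ when both are $\geq 2$. The main subtlety to watch — and really the only non-cosmetic point — is the second step: a generic finite subgroup of $\mathrm{U}(1)\times\mathrm{U}(1)$ can contain diagonal elements such as $\mathrm{diag}(\zeta,\zeta)$ with no eigenvalue equal to $1$, and such elements are not pseudo-reflections; the reflection-group hypothesis is exactly what forbids these and forces the decomposition $G=A\times B$.
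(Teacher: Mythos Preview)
Your argument is correct and follows the same line as the paper: diagonalise via $V$ and $V^{\perp}$, then use the reflection-group hypothesis to conclude that $G$ is generated by elements of the form $\mathrm{diag}(\zeta,1)$ and $\mathrm{diag}(1,\zeta)$. The paper's treatment is terser --- it simply asserts ``this means that $G$ is generated by $g_1,g_2$'' --- whereas you make explicit the key point that an arbitrary finite subgroup of the diagonal torus need not split as $\mathbb{Z}_m\times\mathbb{Z}_n$, and that it is precisely the generation by pseudo-reflections that forces this.
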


\subsection{The irreducible case} \label{sec:irreducible}

The finite irreducible complex reflection groups were classified by
Shephard and Todd in \cite{ST54}. They found an infinite family
$G(m, \, p, \, 2)$, depending on two positive integer parameters
$m$, $p$, with $p | m$, and $19$ exceptional cases, that they
numbered from $4$ to $22$. We start by describing the groups
belonging to the infinite family. One has
\begin{equation*}
G(m, \, p, \, 2)=\mathbb{Z}_2 \ltimes A(m, \; p, \; 2),
\end{equation*}
where $A(m, \, p, \, 2)$ is the abelian group of order $m^2/p$ whose
elements are the matrices $\left(
                                  \begin{array}{cc}
                                    \theta^{\alpha_1} & 0 \\
                                    0 & \theta^{\alpha_2} \\
                                  \end{array}
                                \right)$,
with $\theta=\exp(2 \pi i/m)$ and $\alpha_1+\alpha_2 \equiv 0$ (mod
$p$), whereas $\mathbb{Z}_2$ is generated by $\left(
                                  \begin{array}{cc}
                                    0 & 1 \\
                                    1 & 0 \\
                                  \end{array}
                                \right)$.
In particular, $G(m, \, m, \, 2)$ is the dihedral group of order
$2m$.

\begin{proposition} \label{infinite family}
$(1)$ $G(m, \, p, \, 2)$ acts irreducibly on $\mathbb{C}^2$, except
in the case $G(2,\,2,\,2)$. In particular, $G(m, \, p, \, 2)$ is non-abelian
provided that $(m, \, p) \neq (2, \, 2)$. \\
$(2)$ The only groups in the family $G(m, \, p, \, 2)$ which are
isomorphic as abstract groups are $G(2, \, 1, \, 2)$ and $G(4, \, 4,
\, 2)$.
\end{proposition}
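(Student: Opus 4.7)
Part (1) admits a direct approach: I would exhibit, inside $A(m,p,2)$, a diagonal matrix with two distinct eigenvalues -- which exists whenever $(m,p) \neq (2,2)$ (take $\mathrm{diag}(\theta^p, 1)$ if $p < m$, or $\mathrm{diag}(\theta, \theta^{-1})$ if $p = m \geq 3$). Any $G$-invariant line is necessarily an eigenline of this element, hence either $\langle e_1 \rangle$ or $\langle e_2 \rangle$; the swap generator excludes both. For the non-abelian assertion, a finite abelian subgroup of $\mathrm{GL}(2, \mathbb{C})$ is simultaneously diagonalizable by Maschke, hence reducible -- so irreducibility forces non-abelianness.

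For (2), my plan is to isolate $(m,p)$ via a sequence of increasingly refined invariants. The first two are $|G(m,p,2)| = 2m^2/p$ and $|Z(G(m,p,2))|$; the latter equals $m/p$ for $p$ odd and $2m/p$ for $p$ even, since the center consists of the scalar matrices $\theta^a I$ lying in $A$, i.e.\ those with $2a \equiv 0 \pmod p$. Matching these across an abstract isomorphism $G(m,p,2) \cong G(m',p',2)$ forces $(m,p) = (m',p')$ whenever $p, p'$ have the same parity; when they do not, a short computation yields $m' = 2m$, $p' = 4p$, and (using $p' \mid m'$) the further condition $m = 2pk$ for some integer $k \geq 1$.

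The real difficulty -- and the main obstacle -- is to rule out this parity-switch family except for the known coincidence $(m,p) = (2,1)$, $(m',p') = (4,4)$. Two further invariants will suffice. A standard computation of $[G,G] = \langle [s,a] : a \in A \rangle$ gives
\begin{equation*}
G(m,p,2)^{ab} \cong
\begin{cases}
\mathbb{Z}_2 \times \mathbb{Z}_{m/p} & \text{if } p \text{ is odd},\\
\mathbb{Z}_2 \times \mathbb{Z}_2 \times \mathbb{Z}_{m/p} & \text{if } p \text{ is even};
\end{cases}
\end{equation*}
in the parity-switch case this reads $\mathbb{Z}_2 \times \mathbb{Z}_{2k}$ versus $\mathbb{Z}_2 \times \mathbb{Z}_2 \times \mathbb{Z}_k$, which coincide if and only if $k$ is odd. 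Next, the maximum element order in $G(m,p,2)$ equals $\max(m, 2m/p)$, so it is $m = 2pk$ for $p \geq 2$ and $2m = 4k$ for $p = 1$, whereas $G(4pk, 4p, 2)$ always reaches $4pk$; this rules out all $p \geq 3$. The crux is then $p = 1$, $k$ odd, where every invariant examined so far agrees. Here I would count involutions: $G(2k, 1, 2)$ has $3 + 2k$ elements of order two and $G(4k, 4, 2)$ has $1 + 4k$, so the two groups can be isomorphic only when $k = 1$. That remaining coincidence $G(2,1,2) \cong G(4,4,2)$ is then verified directly, since both are the dihedral group of order $8$.
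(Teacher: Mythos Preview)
Your argument is correct in both parts, though the route you take for (2) differs from the paper's.

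For part (1), your approach is essentially dual to the paper's: you fix a diagonal element with distinct eigenvalues (forcing any invariant line to be a coordinate axis) and then observe that the swap excludes these, while the paper fixes the swap first (forcing any invariant line to be $x=\pm y$) and then observes that the diagonal elements exclude these. The two arguments are equivalent. A small quibble: simultaneous diagonalizability of a finite abelian subgroup of $\mathrm{GL}(2,\mathbb{C})$ is really linear algebra (commuting diagonalizable operators), not Maschke's theorem.

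For part (2), both proofs begin with the order and the center to reduce to the parity--switch family $(m,p)\leftrightarrow(2m,4p)$ with $p$ odd and $m$ even. After that you and the paper diverge. The paper proceeds \emph{directly} to a single involution count valid for all such $(m,p)$: it finds $m+3$ involutions in $G(m,p,2)$ and $2m+3$ or $2m+1$ in $G(2m,4p,2)$ according as $4\mid m$ or not, and solves $m+3=2m+1$ to get $m=2$, $p=1$. Your approach instead layers three further invariants: the abelianization (to force $k=m/2p$ odd), the maximal element order (to force $p=1$, since for odd $p\geq 3$ the maxima $2pk$ and $4pk$ disagree), and only then an involution count in the surviving case $p=1$. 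Each of your steps is correct, and your involution counts for $G(2k,1,2)$ and $G(4k,4,2)$ with $k$ odd agree with the paper's general formulae specialized to that case. What the paper's approach buys is economy (one invariant after the center suffices); what yours buys is that the final involution count is done only in the simplest residual case. One further minor point: when you write ``for $p\geq 2$'' you mean $p\geq 3$, since $p$ is odd throughout the parity--switch analysis.
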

\begin{proof}
$(1)$ Suppose that $G=G(m, \, p, \, 2)$ leaves invariant a
nontrivial proper linear subspace $V \subset \mathbb{C}^2$. In particular,
$V$ must be invariant under the linear transformation $(x, \,y) \lr
(y, \,x)$, hence we may assume, up to an interchanging of $V$ and
$V^{\perp}$, that $V$ is the line $x-y=0$. As $A(m, \; p, \; 2)$
stabilizes $V$, all diagonal coefficients of an element of $A(m, \;
p, \; 2)$  must be equal. From this, one easily deduces that $m=p=2$.
On the other hand, it is obvious that $G(2,\;2,\;2) \cong
\mathbb{Z}_2 \times \mathbb{Z}_2$ acts reducibly on $\mathbb{C}^2$.
\\ \\
$(2)$ Assume that $G(m, \, p,\, 2)$ and $G(m', \, p', \, 2)$ are
isomorphic as abstract groups. In particular $|G(m, \, p, \,
2)|=|G(m', \, p', \, 2)|$ and $|Z(G(m, \, p, \, 2))|=|Z(G(m', \, p',
\, 2))|$. Setting $q=m/p$, $q'=m'/p'$, by \cite[p. 387]{Coh76} we
obtain
\begin{equation*}
mq=m'q', \quad q \cdot \textrm{gcd}(p,2)=q' \cdot
\textrm{gcd}(p',2).
\end{equation*}
If $\textrm{gcd}(p,2)=\textrm{gcd}(p',2)$ we have $q=q'$, hence
$m=m'$ and $p=p'$. Therefore we may suppose that $p$ is odd and $p'$
is even. Hence $q=2q'$, that is $m'=2m$ and $p'=4p$. Since $p'|m'$,
it follows that $m$ must be even. Summing up, we are left to
understand when $G(m, \, p, \, 2)$ and $G(2m, \, 4p, \, 2)$, $m$
even, $p$ odd are isomorphic as abstract groups. If $m$ is even and
$p$ is odd, there are exactly $m+3$ elements of order $2$ in $G(m,
\, p, \, 2)$, namely
\begin{equation*}
\left(
  \begin{array}{cc}
    \theta ^{m/2 }& 0 \\
    0 & 1 \\
  \end{array}
\right), \,
\left(
  \begin{array}{cc}
    1 & 0 \\
    0 & \theta^{m/2} \\
  \end{array}
\right), \,
\left(
  \begin{array}{cc}
    \theta ^{m/2 }& 0 \\
    0 & \theta^{m/2} \\
  \end{array}
\right) \, \textrm{and} \, \left(
  \begin{array}{cc}
    0 & \theta^{\alpha_1} \\
    \theta^{\alpha_2} & 0 \\
  \end{array}
\right),
\end{equation*}
where $\alpha_1+\alpha_2=m$. On the other hand, the two matrices
\begin{equation*}
\left(
  \begin{array}{cc}
    \theta ^m & 0 \\
    0 & 1 \\
  \end{array}
\right), \,
\left(
  \begin{array}{cc}
    1 & 0 \\
    0 & \theta^m \\
  \end{array}
\right)
\end{equation*}
belong to $G(2m,\, 4p,\, 2)$ if and only if $4|m$. So $G(2m, \, 4p,
\, 2)$ contains $2m+3$ elements of order $2$ if $4 |m$, and $2m+1$
elements of order $2$ if $4 \nmid m$. Consequently, if  $G(m, \, p,
\, 2)$ and $G(2m, \, 4p, \, 2)$, $m$ even, $p$ odd are isomorphic as
abstract groups the only possibility is $4 \nmid m$ and $m+3=2m+1$,
that is
 $m=2$, $p=1$. Finally, it is no difficult to check that $G(2, \, 1,
  \, 2)$ and $G(4, \, 4, \, 2)$ are conjugate in $\textrm{U}(2)$, hence they
  are isomorphic not only as abstract groups, but actually as
  complex reflection groups, see \cite[p. 388]{Coh76}.
\end{proof}
Now let us consider the exceptional groups in the Shephard-Todd's
list. We closely follow the treatment given in \cite{ST54}. For
$p=3, \, 4, \, 5$, the abstract group
\begin{equation*}
\langle s, \, t \, | \,s^2=t^3=(st)^p=1 \rangle
\end{equation*}
is isomorphic to $\mathcal{A}_4$, $\mathcal{S}_4$ and
$\mathcal{A}_5$, respectively. These are the well-known groups of
symmetries of regular polyhedra: $\mathcal{A}_4$ is the symmetry
group of the tetrahedron, $\mathcal{S}_4$ is the symmetry group of
the cube (and of the octahedron) and $\mathcal{A}_5$ is the symmetry
group of the dodecahedron (and the icosahedron). We take Klein's
representation of these groups by complex matrices (\cite{Kl84}),
and we call $S_1$, $T_1$ the matrices corresponding to the
generators $s$ and $t$, respectively. Therefore the exceptional
finite complex reflection groups are generated by matrices
\begin{equation*}
S=\lambda S_1, \quad T=\mu T_1, \quad Z= \exp(2 \pi i /k) I,
\end{equation*}
where $\lambda$, $\mu$ are suitably chosen roots of unity and $k$ is
a suitable integer. The corresponding abstract presentations are of
the form
\begin{equation} \label{group-presentation}
\langle S, \, T, \, Z \,| \, S^2=Z^{k_1}, \, T^3=Z^{k_2}, \,
(ST)^p=Z^{k_3}, \, [S,Z]=1, \, [T, Z]=1, \, Z^k=1 \rangle
\end{equation}
where $p=1, \, 2, \, 3$ and $k_1$, $k_2$, $k_3$, $k$ are suitably
chosen integers. We shall arrange the possible values of $\lambda$,
$\mu$, $k_1$, $k_2$, $k_3$, $k$ in tabular form, according to
Shephard-Todd's list (\cite[p. 280-286]{ST54}).  \\ \\
\emph{Exceptional groups derived from} $\mathcal{A}_4$. Set
$\omega=\exp(2 \pi i /3)$, $\varepsilon=\exp(2 \pi i /8)$. We have
\begin{equation*}
S_1=\left(
       \begin{array}{cc}
         i & 0 \\
         0 & -i \\
       \end{array}
     \right), \quad
T_1= \frac{1}{\sqrt{2}} \left(
       \begin{array}{cc}
         \varepsilon & \varepsilon^3 \\
         \varepsilon & \varepsilon^7 \\
       \end{array}
     \right).
\end{equation*}
The four corresponding groups are shown in Table
\ref{exceptional-from-A4} below. Here \verb|IdSmallGroup|$(G)$
denotes the label of $G$ in the \verb|GAP4| database of small
groups, which includes all groups of order less than $2000$, with the exception
of $1024$ (\cite{GAP4}). For instance, one has
\verb|[24,3]|$=\textrm{SL}_2(\mathbb{F}_3)$ and this means that
$\textrm{SL}_2(\mathbb{F}_3)$ is the third in the list of groups of
order $24$ (see the \verb|GAP4| script 1 in the Appendix).

\begin{table}[H]
\begin{center}
\begin{tabular}{c c c c c c c c c}
\hline
$ $ & \verb|IdSmall| & $ $ & $ $ & $ $ & $ $ & $ $ & $ $ & $ $ \\
No. & \verb|Group|$(G)$ & $\lambda$ & $\mu$ & $k_1$ & $k_2$ & $k_3$
& $k$ & Degrees \\
\hline
$4$ & \verb|[24,3]| & $-1$ & $-\omega$ & $1$ & $2$ & $2$ & $2$ & $4,\, 6$ \\
$5$ & \verb|[72,25]| & $- \omega$ & $- \omega$ & $1$ & $6$ & $6$ &
$6$ &
$6, \, 12$ \\
$6$ & \verb|[48,33]| & $i$ & $- \omega$ & $4$ & $4$ & $1$ & $4$ &
$4, \, 12$ \\
$7$ & \verb|[144,157]| & $i \omega$ & $- \omega$ & $8$ & $12$ & $3$
& $12$ & $12, \, 12$ \\ \hline
\end{tabular}
\end{center}
\caption{} \label{exceptional-from-A4}
\end{table}

\noindent \emph{Exceptional groups derived from} $\mathcal{S}_4$. We
have
\begin{equation*}
S_1= \frac{1}{\sqrt{2}}\left(
       \begin{array}{cc}
         i & 1 \\
         -1 & -i \\
       \end{array}
     \right), \quad
T_1= \frac{1}{\sqrt{2}} \left(
       \begin{array}{cc}
         \varepsilon & \varepsilon \\
         \varepsilon^3 & \varepsilon^7 \\
       \end{array}
     \right).
\end{equation*}
The eight corresponding groups are shown in Table
\ref{exceptional-from-S4} below.
\begin{table}[H]
\begin{center}
\begin{tabular}{c c c c c c c c c}
\hline
$ $ & \verb|IdSmall| & $ $ & $ $ & $ $ & $ $ & $ $ & $ $ & $ $ \\
No. & \verb|Group|$(G)$ & $\lambda$ & $\mu$ & $k_1$ & $k_2$ & $k_3$
& $k$ & Degrees \\
\hline
$8$ & \verb|[96,67]| & $\varepsilon^3$ & $1$ & $1$ & $2$ & $4$ & $4$ & $8,\, 12$ \\
$9$ & \verb|[192,963]| & $i$ & $\varepsilon$ & $8$ & $7$ & $8$ & $8$ & $8,\, 24$ \\
$10$ & \verb|[288,400]| & $\varepsilon^7 \omega^2$ & $- \omega$ &
$7$ & $12$ & $12$ & $12$ & $12,\, 24$ \\
$11$ & \verb|[576,5472]| & $i$ & $ \varepsilon \omega$ &
$24$ & $21$ & $8$ & $24$ & $24,\, 24$ \\
$12$ & \verb|[48,29]| & $i$ & $1$ &
$2$ & $1$ & $1$ & $2$ & $6,\, 8$ \\
$13$ & \verb|[96,192]| & $i$ & $i$ &
$4$ & $1$ & $2$ & $4$ & $8,\, 12$ \\
$14$ & \verb|[144,122]| & $i$ & $- \omega$ &
$6$ & $6$ & $5$ & $6$ & $6,\, 24$ \\
$15$ & \verb|[288,903]| & $i$ & $i \omega$ &
$12$ & $3$ & $10$ & $12$ & $12,\, 24$ \\
 \hline
\end{tabular}
\end{center}
\caption{} \label{exceptional-from-S4}
\end{table}

\noindent \emph{Exceptional groups derived from} $\mathcal{A}_5$.
Set $\eta= \exp(2 \pi i /5)$. We have
\begin{equation*}
S_1= \frac{1}{\sqrt{5}}\left(
       \begin{array}{cc}
         \eta^4 - \eta & \eta^2 - \eta^3 \\
         \eta^2 - \eta^3 & \eta - \eta^4 \\
       \end{array}
     \right), \quad
T_1= \frac{1}{\sqrt{5}} \left(
       \begin{array}{cc}
         \eta^2 - \eta^4 & \eta^4 -1 \\
         1 - \eta & \eta^3 - \eta \\
       \end{array}
     \right).
\end{equation*}
The seven corresponding groups are shown in Table
\ref{exceptional-from-A5} below.

\begin{table}[H]
\begin{center}
\begin{tabular}{c c c c c c c c c}
\hline
$ $ & \verb|IdSmall| & $ $ & $ $ & $ $ & $ $ & $ $ & $ $ & $ $ \\
No. & \verb|Group|$(G)$ & $\lambda$ & $\mu$ & $k_1$ & $k_2$ & $k_3$
& $k$ & Degrees \\
\hline
$16$ & \verb|[600,54]| & $-\eta^3$ & $1$ & $7$ & $10$ & $10$ & $10$ & $20,\, 30$ \\
$17$ & \verb|[1200,483]| & $i$ & $i \eta^3$ & $20$ & $11$ & $20$ & $20$ & $20,\, 60$ \\
$18$ & \verb|[1800,328]| & $- \omega \eta^3$ & $\omega^2$ &
$11$ & $30$ & $30$ & $30$ & $30,\, 60$ \\
$19$ & \verb|[3600, ]| & $i \omega$ & $i \eta^3$ &
$40$ & $33$ & $40$ & $60$ & $60,\, 60$ \\
$20$ & \verb|[360,51]| & $1$ & $\omega^2$ &
$3$ & $6$ & $5$ & $6$ & $12,\, 30$ \\
$21$ & \verb|[720,420]| & $i$ & $\omega^2$ &
$12$ & $12$ & $1$ & $12$ & $12,\, 60$ \\
$22$ & \verb|[240, 93]| & $i$ & $1$ &
$4$ & $4$ & $3$ & $4$ & $12,\, 20$ \\
\hline
\end{tabular}
\end{center}
\caption{} \label{exceptional-from-A5}
\end{table}

\begin{proposition} \label{prop:no-isomorphism}
None of the groups in Tables \emph{\ref{exceptional-from-A4}},
\emph{\ref{exceptional-from-S4}}, \emph{\ref{exceptional-from-A5}}
is isomorphic as an abstract group to some $G(m, \,p,\, 2)$.
\end{proposition}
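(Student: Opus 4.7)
The plan centers on a single group-theoretic invariant that discriminates the two families: by construction $A(m,p,2) \triangleleft G(m,p,2)$ is abelian of order $m^2/p$, hence an abelian normal subgroup of index $2$. Consequently, to prove the proposition it suffices to show that no exceptional group $G_k$ ($k=4,\ldots,22$) admits an abelian normal subgroup of index $2$. Since any subgroup of index $2$ is automatically normal and must contain the commutator subgroup $[G_k, G_k]$ (the quotient being cyclic of order $2$), the task reduces to verifying that $[G_k, G_k]$ is \emph{non-abelian} for every $k$.

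For the fifteen groups derived from $\mathcal{S}_4$ or $\mathcal{A}_5$ (Tables \ref{exceptional-from-S4} and \ref{exceptional-from-A5}) I would use a clean structural argument. The projection $G_k \lr G_k/Z(G_k) \in \{\mathcal{S}_4, \mathcal{A}_5\}$ sends $[G_k, G_k]$ onto $[\mathcal{S}_4, \mathcal{S}_4] = \mathcal{A}_4$ or $[\mathcal{A}_5, \mathcal{A}_5] = \mathcal{A}_5$, both non-abelian; therefore $[G_k, G_k]$ itself is non-abelian. This argument is in fact indispensable for $G_{19}$ of order $3600$, which lies beyond the range of the \verb|IdSmallGroup| database and therefore cannot be handled by a direct label comparison.

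For the four $\mathcal{A}_4$-derived groups $G_4, G_5, G_6, G_7$ (Table \ref{exceptional-from-A4}) the soft argument fails, because $[\mathcal{A}_4, \mathcal{A}_4] = V_4$ is abelian, so one must inspect $[G_k, G_k]$ directly. Since $\lambda, \mu$ and $\exp(2\pi i/k)$ are central scalars, they cancel in every commutator, yielding the matrix identities $[S, T] = [S_1, T_1]$ and $[S, T S T^{-1}] = [S_1, T_1 S_1 T_1^{-1}]$. These two commutators project to two distinct involutions of $V_4 \subset \mathcal{A}_4$, so their lifts in Klein's binary tetrahedral representation have order $4$ and generate the quaternion group $Q_8$; in particular they do not commute, so $[G_k, G_k]$ is non-abelian. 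As an independent cross-check, each of the four cases can be verified in \verb|GAP4| starting from the presentation \eqref{group-presentation}, or by enumerating the finitely many pairs $(m,p)$ with $2m^2/p \in \{24, 48, 72, 144\}$ and comparing \verb|IdSmallGroup| labels with those in Table \ref{exceptional-from-A4}.

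The principal obstacle is precisely this $\mathcal{A}_4$-derived case: no purely quotient-level argument can work, because the image of $[G_k, G_k]$ in $G_k/Z(G_k) = \mathcal{A}_4$ is the abelian group $V_4$, so the non-commutativity is visible only after passing to the central extension and exploiting the quaternionic binary lift of $V_4$. Once this explicit matrix computation is carried out, the proof assembles uniformly across all nineteen exceptional groups.
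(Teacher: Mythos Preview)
Your proof is correct and shares the paper's key reduction: since $A(m,p,2)$ is an abelian normal subgroup of index $2$ in $G(m,p,2)$, it suffices to show that no exceptional $G_k$ has such a subgroup. The implementations diverge from there. The paper simply asserts that $\langle Z\rangle\cong\mathbb{Z}_k$ is the \emph{maximal} normal abelian subgroup of $G_k$ and then checks numerically that $2k<|G_k|$ in every row of the tables. You instead prove that $[G_k,G_k]$ is non-abelian, which forces every index-$2$ subgroup to be non-abelian; your quotient argument handles the $\mathcal{S}_4$- and $\mathcal{A}_5$-derived groups cleanly, and your explicit $Q_8$ computation covers the $\mathcal{A}_4$-derived ones. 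The paper's route is shorter on the page but its central assertion is not argued: to know that nothing larger than $\langle Z\rangle$ is normal abelian one must in effect check that the preimage of $V_4\subset G_k/Z(G_k)$ is non-abelian in the $\mathcal{A}_4$- and $\mathcal{S}_4$-derived cases, which is precisely the $Q_8$ verification you carry out. So your argument is somewhat longer but more self-contained, while the paper's is terser but leaves that step to the reader.
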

\begin{proof}
Let $G$ be one of the groups in the tables. Looking at the
presentation \eqref{group-presentation}, one easily sees that the
center of $G$ is $\langle Z \rangle \cong \mathbb{Z}_k$ and that
this is the maximal normal abelian subgroup of $G$. Since in every
case $2k < |G|$, this implies that $G$ contains no normal abelian
subgroups of index $2$, hence it cannot be isomorphic to $G(m, \, p,
\, 2) = \mathbb{Z}_2 \ltimes A(m, \, p, \, 2)$.
\end{proof}

\begin{definition}
A finite group $G$ of unitary automorphisms of $\mathbb{C}^2$ is
called \emph{imprimitive} if $\mathbb{C}^2=V_1 \oplus V_2$, where
$V_1$ and
 $V_2$ are nontrivial proper linear subspaces such that the set
 $\{V_1, \, V_2 \}$ is invariant under $G$. If such a direct
 splitting of $\mathbb{C}^2$ does not exist, $G$ is called
 \emph{primitive}.
\end{definition}
Notice that every group $G(m, \, p, \, 2)$ is imprimitive, since we
can take $V_1= \langle e_1 \rangle$, $V_2 = \langle e_2 \rangle$. By
Proposition \ref{infinite family} and \cite[p. 386 and p.
 394]{Coh76} one obtains

\begin{proposition} \label{conjugacy-irreducible}
Let $G$ be an irreducible finite complex reflection group  acting
on $\mathbb{C}^2$. \\
$(1)$ If $G$ is imprimitive, then $G$ is conjugate in $\emph{U}(2)$
to $G(m,\, p,\, 2)$ for some $m, \, p \in \mathbb{N}$, $p|m$, $(m,
\,p) \neq (2,2)$. The pair $(m,\,p)$ is uniquely determined, with
the exception of $G(2,\,1,\,2)$ which is conjugate to $G(4,4,2)$.
\\
$(2)$ If $G$ is primitive, then $G$ is conjugate in $\emph{U}(2)$ to
exactly one of the groups $4, \ldots, 22$ in Tables $1$, $2$, $3$.
\end{proposition}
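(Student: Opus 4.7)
The plan is to combine the Shephard--Todd classification with the auxiliary lemmas already established in Section \ref{sec:irreducible}. First, I would invoke the main result of \cite{ST54}: every irreducible finite complex reflection group acting on $\mathbb{C}^2$ is conjugate in $\textrm{U}(2)$ to some member of the infinite family $G(m,\,p,\,2)$ with $p \mid m$, or to one of the $19$ exceptional groups numbered $4, \ldots, 22$ in Tables \ref{exceptional-from-A4}, \ref{exceptional-from-S4}, \ref{exceptional-from-A5}. Next, I would separate the imprimitive and primitive cases. The decomposition $\mathbb{C}^2 = \langle e_1 \rangle \oplus \langle e_2 \rangle$ is preserved as an unordered pair by $G(m,\,p,\,2)$ (the diagonal subgroup $A(m,\,p,\,2)$ fixes each summand, and the generator of the $\mathbb{Z}_2$ factor swaps them), so every member of the infinite family is imprimitive; conversely, the $19$ exceptional groups are primitive by construction, a fact I would cite from \cite[p.~386]{Coh76}.

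For part $(1)$, given an imprimitive irreducible $G$, the above dichotomy forces $G$ to be conjugate in $\textrm{U}(2)$ to some $G(m,\,p,\,2)$. Irreducibility excludes $(m,\,p) = (2,\,2)$ by Proposition \ref{infinite family}$(1)$. For the uniqueness statement, I would use the fact that conjugacy in $\textrm{U}(2)$ implies isomorphism as abstract groups, and then appeal to Proposition \ref{infinite family}$(2)$, which identifies $G(2,\,1,\,2) \cong G(4,\,4,\,2)$ as the only abstract coincidence within the family. The stronger claim that these two specific groups are actually conjugate in $\textrm{U}(2)$, rather than merely abstractly isomorphic, is recorded in \cite[p.~388]{Coh76}.

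For part $(2)$, given a primitive irreducible $G$, the dichotomy forces $G$ to be conjugate to one of the $19$ exceptional groups. Proposition \ref{prop:no-isomorphism} rules out any coincidence with a member of the infinite family. To see that the $19$ exceptional groups are pairwise non-conjugate in $\textrm{U}(2)$, it suffices to observe that the \verb|IdSmallGroup| labels collected in Tables \ref{exceptional-from-A4}--\ref{exceptional-from-A5} are all distinct, so these groups are already pairwise non-isomorphic as abstract groups, and a fortiori non-conjugate in $\textrm{U}(2)$.

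The main obstacle in this argument is essentially bibliographic rather than mathematical: the heavy lifting is done by the Shephard--Todd classification, which I would cite rather than reprove. The genuine mathematical content that needs to be supplied locally is confined to the uniqueness statements, and these have been reduced to Propositions \ref{infinite family}$(2)$ and \ref{prop:no-isomorphism} together with the direct inspection of the \verb|IdSmallGroup| column in the three tables.
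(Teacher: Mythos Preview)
Your proposal is correct and follows essentially the same route as the paper, which disposes of the proposition in one line by citing Proposition~\ref{infinite family} together with \cite[p.~386 and p.~394]{Coh76}. The only minor variation is that for the pairwise non-conjugacy of the $19$ exceptional groups you argue internally, via the distinctness of the \texttt{IdSmallGroup} labels in Tables~\ref{exceptional-from-A4}--\ref{exceptional-from-A5}, whereas the paper simply defers to Cohen; both are valid, and your check has the virtue of being verifiable from data already present in the paper.
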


\subsection{The classification} \label{sub:classification}

Now we can give the classification, up to equivalence, of finite
Galois coverings $f \colon \mathbb{C}^2 \lr \mathbb{C}^2$. Set
\begin{equation*}
\begin{split}
\textsf{a}_4(x,\,y)&=x^4+(4 \xi -2)x^2y^2+y^4, \quad \xi=\exp(2 \pi i/6), \\
\textsf{b}_6(x,\, y)&=x^5y-xy^5, \\
\textsf{c}_8(x,\,y)&=x^8+14x^4y^4+y^8, \\ \textsf{d}_{12}(x, \,
y)&=
x^{12}-33x^8y^4-33x^4y^8+y^{12}, \\
\textsf{e}_{12}(x, \, y)&=x^{11}y+11x^6y^6-xy^{11}, \\
\textsf{f}_{20}(x, \,
y)&=x^{20}-228x^{15}y^5+494x^{10}y^{10}+228x^5y^{15}+y^{20}, \\
\textsf{g}_{30}(x,y)&=x^{30}+522x^{25}y^5-10005x^{20}y^{10}-10005x^{10}y^{20}-522x^5y^{25}+y^{30}.
\end{split}
\end{equation*}
Then we have

\begin{theorem} \label{teo:Galois}
Let $f \colon \mathbb{C}^2 \lr \mathbb{C}^2$ be a polynomial map
which is a Galois covering with finite Galois group $G$. Then, up to
equivalence, we are in one of the cases in Table
\emph{\ref{table:Galois}} below. Furthermore, these maps are
pairwise non-equivalent, with the only exception of $\mathfrak{f}_{2, \, 1, \,
2}$ and $\mathfrak{f}_{4, \, 4, \, 2}$.
\end{theorem}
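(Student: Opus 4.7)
The plan is to combine the reduction to invariant theory (Corollary \ref{cor:basic set}) with the conjugacy classification of finite complex reflection groups on $\mC^2$ collected in Propositions \ref{reducible} and \ref{conjugacy-irreducible}. By Kambayashi's theorem, followed by unitary averaging, the Galois group $G$ of $f$ is conjugate in $\textrm{Aut}(\mC^2)$ to a subgroup of $\textrm{U}(2)$. Conjugating $G$ by a linear $T$ replaces $f$ by $f \circ T^{-1}$, an equivalent map, so we may assume $G$ is one of the representatives in the classification: a cyclic $\mZ_m$, a product $\mZ_m \times \mZ_n$, an imprimitive group $G(m,\,p,\,2)$, or one of the nineteen exceptional groups numbered $4,\ldots,22$. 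Corollary \ref{cor:basic set} then reduces $f$ to the polynomial map $(\phi_1,\,\phi_2)$ coming from any basic set of invariants for $G$, and what remains is to exhibit such a basic set in each case and to show that the resulting entries of Table \ref{table:Galois} are pairwise non-equivalent apart from the stated coincidence.

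The basic invariants for the reducible cases are immediate: $\mathbb{C}[x,\,y]^{\mZ_m}=\mathbb{C}[x,\,y^m]$ and $\mathbb{C}[x,\,y]^{\mZ_m\times\mZ_n}=\mathbb{C}[x^m,\,y^n]$. For $G(m,\,p,\,2)=\mZ_2\ltimes A(m,\,p,\,2)$, the lowest-degree invariants of the diagonal subgroup $A(m,\,p,\,2)$ are $(xy)^{m/p}$ together with $x^m, y^m$; symmetrizing under the swap yields the basic set $\{(xy)^{m/p},\,x^m+y^m\}$, whose degrees $2m/p$ and $m$ multiply to $|G|=2m^2/p$. For the nineteen exceptional groups one starts from Klein's classical invariants of the binary polyhedral groups $\mathcal{A}_4,\mathcal{S}_4,\mathcal{A}_5$, rescaled by suitable roots of unity to absorb the central factor $Z$ in the presentation \eqref{group-presentation}; the resulting polynomials $\textsf{a}_4,\textsf{b}_6,\textsf{c}_8,\textsf{d}_{12},\textsf{e}_{12},\textsf{f}_{20},\textsf{g}_{30}$ (or appropriate products/powers) have degrees matching the Degrees column of Tables \ref{exceptional-from-A4}--\ref{exceptional-from-A5}, and a short \verb|Singular| computation confirms that in each instance the two chosen invariants generate the full ring $\mathbb{C}[x,\,y]^G$. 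This produces the entries of Table \ref{table:Galois}.

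For pairwise non-equivalence, suppose $f_2=\Phi_2\circ f_1\circ\Phi_1$ with each $f_i$ a Galois covering of group $G_i\subset\textrm{Aut}(\mC^2)$. From $f_2\circ g=f_2$ for $g\in G_2$ one derives $f_1\circ(\Phi_1 g\Phi_1^{-1})=f_1$, whence $\Phi_1 G_2\Phi_1^{-1}\subseteq G_1$; by symmetry the inclusion is an equality, so $G_1\cong G_2$ as abstract groups. It therefore suffices to show that the Galois groups attached to the entries of Table \ref{table:Galois} are pairwise non-isomorphic, with the sole exception of $G(2,\,1,\,2)\cong G(4,\,4,\,2)$. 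The two reducible families are distinguished among themselves by their invariant factors, and separated from all $G(m,\,p,\,2)$ with $(m,\,p)\neq(2,\,2)$ by Proposition \ref{infinite family}(1), which says the latter are non-abelian; non-isomorphism within the infinite family is exactly Proposition \ref{infinite family}(2); non-isomorphism between the infinite family and the exceptional groups is Proposition \ref{prop:no-isomorphism}; and the exceptional groups are pairwise distinguished by the \verb|IdSmallGroup| labels tabulated in Tables \ref{exceptional-from-A4}--\ref{exceptional-from-A5}. The positive equivalence $\mathfrak{f}_{2,\,1,\,2}\sim\mathfrak{f}_{4,\,4,\,2}$ is, conversely, immediate from Proposition \ref{conjugacy-irreducible}(1) combined with Corollary \ref{cor:basic set} and Proposition \ref{basic set equivalent}, since $G(2,\,1,\,2)$ and $G(4,\,4,\,2)$ are conjugate inside $\textrm{U}(2)$.

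The main technical obstacle lies in the exceptional case: for each of the $19$ Shephard--Todd groups specified by the parameters $(\lambda,\mu,k_1,k_2,k_3,k)$ we must both identify an explicit basic set of invariants and verify that the map it defines matches, up to the chosen linear normalisation, the entry displayed in Table \ref{table:Galois}. This matching is an algebra computation that is cleanest carried out via the \verb|GAP4| and \verb|Singular| scripts referenced in the Appendix.
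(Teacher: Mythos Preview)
Your overall strategy coincides with the paper's: reduce to the Shephard--Todd classification via Propositions \ref{reducible} and \ref{conjugacy-irreducible}, invoke Corollary \ref{cor:basic set} to replace $f$ by the map built from a basic set of invariants, and then argue non-equivalence through non-isomorphism of the Galois groups using Propositions \ref{infinite family} and \ref{prop:no-isomorphism}. You even make explicit a step the paper leaves tacit, namely that equivalent Galois coverings have Galois groups conjugate in $\textrm{Aut}(\mC^2)$ via $\Phi_1$, hence abstractly isomorphic.

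There is, however, a genuine gap in your handling of the reducible families (and the paper's terse proof is equally silent here, since Propositions \ref{infinite family} and \ref{prop:no-isomorphism} concern only the irreducible groups). Your assertion that the entries $\mathfrak{f}_m$ and $\mathfrak{f}_{m,n}$ are ``distinguished among themselves by their invariant factors'' is false: $\mathbb{Z}_6$ and $\mathbb{Z}_2 \times \mathbb{Z}_3$ have the same invariant factors, yet $\mathfrak{f}_6$ and $\mathfrak{f}_{2,3}$ are inequivalent; likewise $\mathbb{Z}_6 \times \mathbb{Z}_4 \cong \mathbb{Z}_{12} \times \mathbb{Z}_2$ as abstract groups, while $\mathfrak{f}_{6,4}$ and $\mathfrak{f}_{12,2}$ are inequivalent. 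Abstract isomorphism is simply too coarse an invariant for the abelian cases. What actually separates these maps is either the branch locus (one line for $\mathfrak{f}_m$ versus two for $\mathfrak{f}_{m,n}$, via Proposition \ref{crit-bihol}) or, more finely, the unordered pair of ramification indices along the branch components, which any equivalence must preserve. You should also say a word separating the reducible groups from the exceptional ones: each $G_i/\langle Z\rangle$ is one of $\mathcal{A}_4,\mathcal{S}_4,\mathcal{A}_5$, so no $G_i$ is abelian, whence no $G_i$ can coincide with a $\mathbb{Z}_m$ or $\mathbb{Z}_m\times\mathbb{Z}_n$.
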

\begin{table}[H]
\begin{center}
\begin{tabular}{c c c c}
\hline
Map & $\phi_1, \, \phi_2$ & $G$ & Branch locus \\
\hline
$\mathfrak{f}_m$ & $x, \, y^m$ & $\mathbb{Z}_m$ & $y=0$ \\
$\mathfrak{f}_{m, \, n}$ & $x^m, \, y^n$ & $\mathbb{Z}_m \times \mathbb{Z}_n$
& $xy=0$ \\
$\mathfrak{f}_{m, \, p, \, 2}$ & $x^{m/p}y^{m/p},\, x^m + y^m$ & $G(m, \,
p,\,
2)$ &  $x(y^2-4x^p)=0 \quad \textrm{if} \; p \neq m$ \\
 & & & $\quad \quad y^2-4x^p=0 \quad \textrm{if} \; p=m$ \\
$\tilde{\mathfrak{f}}_4$ & $\textsf{a}_4, \, \textsf{b}_6$ & $G_4=$\verb|[24, 3]| & $x^3+(-24\xi+12)y^2=0$ \\
$\tilde{\mathfrak{f}}_5$ & $\textsf{b}_6, \, (\textsf{a}_4)^3$ & $G_5=$\verb|[72, 25]| & $y(x^2+ (\frac{1}{18\xi}-\frac{1}{36})y)=0$  \\
$\tilde{\mathfrak{f}}_6$ & $\textsf{a}_4, \, (\textsf{b}_6)^2$ & $G_6=$\verb|[48, 33]| &  $y(x^3+(-24\xi+12)y^2)=0$\\
$\tilde{\mathfrak{f}}_7$ & $(\textsf{b}_6)^2, \, (\textsf{a}_4)^3$ & $G_7=$ \verb|[144, 157]| & $xy(x+ (\frac{1}{18\xi}-\frac{1}{36})y)=0$\\
$\tilde{\mathfrak{f}}_8$ & $\textsf{c}_8, \, \textsf{d}_{12}$ & $G_8=$\verb|[96, 67]| & $y^2-x^3=0$\\
$\tilde{\mathfrak{f}}_9$ & $\textsf{c}_8, \, (\textsf{d}_{12})^2$ & $G_9=$ \verb|[192, 963]| & $y(y-x^3)=0$ \\
$\tilde{\mathfrak{f}}_{10}$ & $\textsf{d}_{12}, \, (\textsf{c}_8)^3$ & $G_{10}=$\verb|[288, 400]| & $y(y-x^2)$=0 \\
$\tilde{\mathfrak{f}}_{11}$ & $(\textsf{d}_{12})^2, \, (\textsf{c}_8)^3$ & $G_{11}=$\verb|[576, 5472]| & $xy(x-y)=0$\\
$\tilde{\mathfrak{f}}_{12}$ & $\textsf{b}_6, \, \textsf{c}_8$ & $G_{12}=$\verb|[48, 29]| & $y^3-108x^4=0$\\
$\tilde{\mathfrak{f}}_{13}$ & $\textsf{c}_8, \, (\textsf{b}_6)^2$ & $G_{13}=$\verb|[96, 192]| & $y(x^3-108y^2)$=0 \\
$\tilde{\mathfrak{f}}_{14}$ & $\textsf{b}_6, \, (\textsf{d}_{12})^2$ & $G_{14}=$\verb|[144, 122]| & $y(y+108x^4)$=0 \\
$\tilde{\mathfrak{f}}_{15}$ & $(\textsf{b}_6)^2, \, (\textsf{d}_{12})^2$ & $G_{15}=$ \verb|[288, 903]| & $xy(y+108x^2)=0$  \\
$\tilde{\mathfrak{f}}_{16}$ & $\textsf{f}_{20}, \, \textsf{g}_{30}$ & $G_{16}=$\verb|[600, 54]| &  $y^2-x^3=0$\\
$\tilde{\mathfrak{f}}_{17}$ & $\textsf{f}_{20}, \, (\textsf{g}_{30})^2$ & $G_{17}=$\verb|[1200, 483]| & $y(y-x^3)=0$\\
$\tilde{\mathfrak{f}}_{18}$ & $\textsf{g}_{30}, \, (\textsf{f}_{20})^3$ & $G_{18}=$\verb|[1800, 328]| & $y(y-x^2)=0$\\
$\tilde{\mathfrak{f}}_{19}$ & $(\textsf{g}_{30})^2, \, (\textsf{f}_{20})^3$ & $G_{19}=$ \verb|[3600, ]| & $xy(x-y)=0$ \\
$\tilde{\mathfrak{f}}_{20}$ & $\textsf{e}_{12}, \, \textsf{g}_{30}$ & $G_{20}=$ \verb|[360, 51]| & $y^2-1728 x^5=0$\\
$\tilde{\mathfrak{f}}_{21}$ & $\textsf{e}_{12}, \, (\textsf{g}_{30})^2$ & $G_{21}=$\verb|[720, 420]| & $y(y-1728x^5)=0$\\
$\tilde{\mathfrak{f}}_{22}$ & $\textsf{e}_{12}, \, \textsf{f}_{20}$ & $G_{22}=$\verb|[240, 93]| & $y^3+1728x^5=0$ \\

\hline

\end{tabular}
\end{center}
\caption{} \label{table:Galois}
\end{table}
\begin{proof}
By Propositions \ref{reducible} and \ref{conjugacy-irreducible}, $G$
 is conjugate in $\textrm{U}(2)$ to one of the groups in Table \ref{table:Galois}.
Moreover by Propositions \ref{infinite family} and
\ref{prop:no-isomorphism} these groups are pairwise not isomorphic,
with the unique exception of $G(2, \, 1, \, 2)$ and $G(4, \, 4, \,
2)$. Therefore, by Corollary \ref{cor:basic set} it is sufficient to
show that in every case $\phi_1, \,\phi_2$ form a basic set of
invariants for $G$. This is obvious in the first three cases. For
the remaining groups we can do a case-by-case analysis, using the
description of $G$ given in Subsections \ref{sec:reducible} and
\ref{sec:irreducible}. A shorter proof can be obtained by noticing
that: \\
$-$ $\textsf{a}_4$ is $G_4$-invariant and, up to a multiplicative
constant, $\textsf{b}_6=\textrm{Jacobian}(\textsf{a}_4, \,
\textrm{Hessian}(\textsf{a}_4))$; \\
$-$ $\textsf{b}_6$ is $G_{12}$-invariant and, up to multiplicative
constants, $\textsf{c}_8=\textrm{Hessian}(\textsf{b}_6)$ and
$\textsf{d}_{12}=\textrm{Jacobian}(\textsf{b}_6, \,
\textsf{c}_8)$;
\\
$-$ $\textsf{e}_{12}$ is $G_{20}$-invariant and, up to
multiplicative constants,
$\textsf{f}_{20}=\textrm{Hessian}(\textsf{e}_{12})$ and
$\textsf{g}_{30}=\textrm{Jacobian}(\textsf{e}_{12}, \,
\textsf{f}_{20})$.
\\
Then $\phi_1, \, \phi_2$ form a basic sets of invariants for $G_{4},
\ldots, G_{22}$ by \cite[p. 285-286]{ST54}, \cite{Chev55}, \cite{Kl84}. \\
Finally, the computation of the branch locus in each case is a
straightforward application of elimination theory and can be carried
out with the help of the Computer Algebra System \verb|Singular|
(\cite{SING}). Look at the \verb|Singular| script
$3$ in the Appendix to see how this applies to an explicit example,
namely the map $\tilde{\mathfrak{f}}_4$.
\end{proof}

The following corollary generalizes Theorem \ref{teo:Lamy}
to the case of Galois coverings of arbitrary degree.

\begin{corollary} \label{cor:finite-galois}
For all $d \geq 2$, there exist only finitely many equivalence
classes of Galois coverings $f \colon \mathbb{C}^2 \lr \mathbb{C}^2$
 of topological degree $d$.
\end{corollary}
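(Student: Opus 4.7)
The plan is to use Theorem C directly: every Galois covering of $\mathbb{C}^2$ is equivalent to one of the maps listed in Table \ref{table:Galois}, and the topological degree of such a map equals the order of the corresponding Galois group $G$. So it suffices to show that, for each fixed $d \geq 2$, only finitely many of the groups appearing in Table \ref{table:Galois} have order $d$.

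First I would dispose of the exceptional groups $G_4, \ldots, G_{22}$ trivially: they form a finite list, so they contribute at most finitely many equivalence classes in total (hence at most finitely many of any given order $d$). The real content is to bound the number of groups from the three infinite families $\mathbb{Z}_m$, $\mathbb{Z}_m \times \mathbb{Z}_n$, and $G(m,p,2)$ with order $d$. For $\mathbb{Z}_m$ one requires $m = d$, giving a single group. For $\mathbb{Z}_m \times \mathbb{Z}_n$ one needs $mn = d$, and there are only finitely many such factorizations. For $G(m,p,2)$, whose order is $2m^2/p$, the relation $2m^2/p = d$ combined with $p \mid m$ forces $m \leq d$ (since $p \leq m$ implies $2m \leq 2m^2/p = d$), so only finitely many admissible pairs $(m,p)$ arise.

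Combining these bounds, the total number of groups $G$ of order $d$ that appear in Table \ref{table:Galois} is finite. By Theorem C each such group determines a single equivalence class of Galois coverings (with the harmless identification of $\mathfrak{f}_{2,1,2}$ and $\mathfrak{f}_{4,4,2}$, which only decreases the count), so the number of equivalence classes of Galois coverings of topological degree $d$ is finite, as required.

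The only genuinely non-trivial step is the bound on the family $G(m,p,2)$; everything else is bookkeeping against Table \ref{table:Galois}. I do not expect any real obstacle, since once Theorem C is in hand the statement reduces to an elementary finiteness check on the classification data.
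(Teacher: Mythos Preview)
Your argument is correct and follows the same approach as the paper: both rely on Theorem~\ref{teo:Galois} and then observe that the three infinite families $\mathbb{Z}_m$, $\mathbb{Z}_m\times\mathbb{Z}_n$, $G(m,p,2)$ contribute only finitely many groups of a given order $d$, while the exceptional cases are a finite list. The paper's proof is terser (it simply states that the equalities $|\mathbb{Z}_m|=d$, $|\mathbb{Z}_m\times\mathbb{Z}_n|=d$, $|G(m,p,2)|=d$ admit finitely many solutions), whereas you spell out the bound $2m\le d$ for the third family, but the substance is identical.
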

\begin{proof}
For all $d \geq 2$, there are only finitely many integers $m$, $n$,
$p$ such that any of the equalities
\begin{equation*}
|\mathbb{Z}_{m}|=d, \quad |\mathbb{Z}_m \times \mathbb{Z}_n|=d,
\quad |G(m, \, p, \, 2)|=d
\end{equation*}
holds.
\end{proof}

\begin{remark}
The computation of the invariant polynomials $\textsf{a}_4, \ldots,
\textsf{g}_{30}$ goes back to Klein, see \emph{\cite{Kl84}}.
Nowadays, it can be easily carried out by using the \verb|Singular|
script $2$ in the Appendix.
\end{remark}

\begin{remark} \label{rem:already-studied}
Some of the coverings in Table \emph{\ref{table:Galois}} already
appeared in the literature. For instance, those with groups $G(m,\,
m,\, 2)$, $G_4$, $G_8$, $G_{16}$, $G_{20}$ were studied $($by
different methods$)$
 in \emph{\cite{NT95}}, whereas those with groups $G(m, \, 1, \, 2)$,
$G_5$, $G_6$, $G_9$, $G_{10}$, $G_{14}$, $G_{17}$, $G_{18}$,
$G_{21}$ were studied in \emph{\cite{NN00}}.
\end{remark}

\begin{remark}
The critical locus of every map in Table \emph{4} is a finite union
of lines through the origin, since the
two components $\phi_1$, $\phi_2$ are always homogeneous polynomials.
Therefore in each case the origin is a total ramification point and this in turn
implies that all these examples are ``polynomial-like" self-maps of
$\mathbb{C}^2$, see \emph{\cite[Example 2.1.1]{DS08}}. In
particular, their dynamical behaviour has been largely investigated,
see \emph{\cite{DS03}}.
\end{remark}

\section*{Appendix}

In this appendix we include, for the reader's convenience, some of
the \verb|GAP4| and \verb|Singular| scripts that we have used in our
computations; all the others are similar and can be easily obtained
modifying the ones below. \\
The \verb|GAP4| script $1$ finds the label \verb|[24, 3]| of the
group $G_4$ in Table \ref{table:Galois} and shows that it is
isomorphic to $\textrm{SL}_2(\mathbb{F}_3)$. The \verb|Singular|
script $2$ computes the basic set of invariants $\textsf{a}_4$,
$\textsf{b}_6$ for $G_4$, whereas the \verb|Singular| script $3$
shows that the branch locus of the map $\tilde{\mathfrak{f}}_4(x, \,
y)=(\textsf{a}_4(x,\, y), \, \textsf{b}_6(x,\,y))$ is the curve
$x^3+(-24 \exp(2 \pi i /6)+12)y^2=0$.

\bigskip

\begin{verbatim}
gap> ####### GAP4 SCRIPT 1: Identifying the groups #######
gap> F:=FreeGroup("s", "t", "z");; gap> s:=F.1;; t:=F.2;; z:=F.3;;
gap> #insert the presentation of G
gap> G:=F/[s^2*z^-1, t^3*z^-2, (s*t)^3*z^-2,
> z*s*z^-1*s^-1, z*t*z^-1*t^-1, z^2];;
gap> # compute the label of G
gap> IdSmallGroup(G);
[24, 3]
gap> # check that G is isomorphic to SL(2,3)
gap> G1:=SL(2,3);; IdSmallGroup(G1);
[24, 3]
\end{verbatim}

\bigskip

\begin{verbatim}
> ;  // ------- SINGULAR SCRIPT 2: Finding the invariants -------
> LIB("finvar.lib");
> ring R =(0,a), (x, y), dp;
> ;  // minimal polynomial of a=exp(2 pi i/24)
> minpoly = a^8-a^4+1;
> number e=a^3;
> number w=a^8;
> number i=e^2;
> number r2=e-e^3; // r2=sqrt(2)
> ;  // define the matrices S1 and T1
> matrix S1[2][2]= i, 0, 0, -i;
> matrix T1[2][2]= e*r2^-1, e^3*r2^-1, e*r2^-1, e^7*r2^-1;
> ;  // define the matrices S and T
> matrix S = -S1;
> matrix T = -w * T1;
> ;  // compute a basic set of invariants
> ;  // for the group generated by S and T
> invariant_ring(S, T);
_[1,1]=x4+(4a4-2)*x2y2+y4 _[1,2]=x5y-xy5 _[1,1]=1 _[1,1]=0
\end{verbatim}

\bigskip

\begin{verbatim}
> ;  // ------- SINGULAR SCRIPT 3: Computing the branch locus -------
> ring R = (0, a), (s, t, x, y), dp;
> ; // minimal polynomial of a=exp(2 pi i/6)
> minpoly = a^2-a+1;
> ;  // define the map f(s,t)=(X(s,t), Y(s,t))
> poly X = s4+(4a-2)*s2t2+t4;
> poly Y = s5t-st5;
> ;  // compute the Jacobian of f
> poly j = diff(X,s)*diff(Y,t)-diff(X,t)*diff(Y,s);
> ideal I = j, x-X, y-Y;
> ;  // compute the equation of the branch curve B(f)
> ;  // by eliminating the variables s, t
> ideal J = eliminate(I,  st);
> J;
J[1]=x3+(-24a+12)*y2

\end{verbatim}

\bigskip \bigskip
CINZIA BISI \\
Dipartimento di Matematica, Universit\`a della Calabria, Via P. Bucci
Cubo 30B, \\
87036 Arcavacata di Rende (CS), Italy. \\
\emph{E-mail address}: \verb|bisi@mat.unical.it| \\ \\

FRANCESCO POLIZZI \\
Dipartimento di Matematica, Universit\`a della Calabria, Via P. Bucci
Cubo 30B, \\
87036 Arcavacata di Rende (CS), Italy. \\
\emph{E-mail address}: \verb|polizzi@mat.unical.it|

\end{document}